\newtheorem{thm}{Theorem}[section]
\newtheorem{lem}[thm]{Lemma}
\newtheorem{cor}[thm]{Corollary}
\newtheorem{prop}[thm]{Proposition}
\newtheorem{rk}[thm]{Remark}
\newtheorem{df}[thm]{Definition}
\newtheorem{ex}[thm]{Example}
\numberwithin{equation}{section}
\providecommand{\AMS}{$\mathcal{A}$\kern-.1667em%
\lower.25em\hbox{$\mathcal{M}$}\kern-.125em$\mathcal{S}$}
\newcommand{\pr}[2]{\ensuremath{\langle {#1},{#2}\rangle}}
\newcommand{\norm}[1]{\ensuremath{\|#1\|}}
\newcommand{\cstar}{\mbox{$C^*$}}
\newcommand{\ma}{\ensuremath{\mathcal{A}}\xspace}
\newcommand{\mb}{\ensuremath{\mathcal{B}}\xspace}
\newcommand{\mh}{\ensuremath{\mathcal{H}}\xspace}
\newcommand{\mi}{\ensuremath{\mathcal{I}}\xspace}
\newcommand{\mj}{\ensuremath{\mathcal{J}}\xspace}
\newcommand{\mk}{\ensuremath{\mathcal{K}}\xspace}
\DeclareMathOperator{\gen}{span}
\DeclareMathOperator{\dom}{dom}
\newcommand{\ov}[1]{\overline{#1}}
\newcommand{\cspan}{\ov{\gen}}
\newenvironment{ntn}{\textbf{Notation.\,}}
\newcommand{\properideal}{%
  \mathrel{\ooalign{$\lneq$\cr\raise.22ex\hbox{$\lhd$}\cr}}}
\newcommand{\supp}{\mathrm{supp}}
\newcommand{\cmb}{C^*(\mb)}
\newcommand{\sss}{\scriptscriptstyle}
\newcommand{\lrangle}{\rangle_{\!{\scriptscriptstyle{L}}}}
\newcommand{\rrangle}{\rangle_{\!{\scriptscriptstyle{R}}}}
\newcommand{\prim}{\mathrm{Prim\thinspace}}
\newcommand{\ind}{\mathrm{Ind}}
\newcommand{\ZZ}{\mathbb{Z}}
\newcommand{\TT}{\mathbb{T}}
\newcommand{\cI}{\mathcal{I}}
\newcommand{\dmn}{D_{\mu\nu}^c}
\begin{document}

\title[Ideals in Cross Sectional \cstar-algebras]{Ideals in Cross Sectional \cstar-algebras of Fell Bundles}
\dedicatory{Dedicated to Marc Rieffel \\on the occasion of his seventy-fifth birthday}
\author[B. Abadie {\protect \and}F. Abadie]{Beatriz Abadie  {\protect
\and} Fernando Abadie}

\address{Centro de Matem\'atica. Facultad de Ciencias. Igu\'a 4225, CP 11 400, Montevideo, Uruguay.}
\email{abadie@cmat.edu.uy}
\address{Centro de Matem\'atica. Facultad de Ciencias. Igu\'a 4225, CP 11 400, Montevideo, Uruguay.}
\email{fabadie@cmat.edu.uy}

\begin{abstract} 
With each Fell bundle over a discrete group $G$ we associate a partial action of $G$ on
the spectrum of the unit fiber. We discuss the ideal structure of the corresponding full and reduced cross-sectional $C^*$-algebras in
terms of the dynamics of this partial action.  
\end{abstract}

\subjclass[2010]{Primary 46L05}
\keywords{Fell bundles, partial actions, crossed products, topological freeness.}

\maketitle

\section*{Introduction}

The discussion of the ideal structure of crossed products by  a discrete group by means of the dynamical properties of the action  goes a long way back (see, for instance, \cite{eh},  \cite{zm}, \cite{od}).

 Archbold and Spielberg  discussed in \cite{ar} the relation between the ideal structure of the full crossed product and that of 
 the base algebra, under the assumption of topological freeness. More
 recently, the definition of topological freeness and several related
 results were extended to different settings: by Exel, Laca and Quigg for partial
 actions on commutative $C^*$-algebras 
 in \cite{elq},  by Lebedev in \cite{leb}, and later by Giordano and
 Sierakowski in \cite{gs}, for partial actions on
 arbitrary $C^*$-algebras, and by
 Kwa\'sniewski in \cite{kwa}) for crossed products by Hilbert $C^*$-bimodules.

We show  in this article that a Fell bundle $\mb$ over a discrete group $G$ gives rise to a partial action of $G$ on the spectrum of the unit fiber. This partial action agrees with those discussed in the works mentioned above, and we generalize to this context some of the results in them.

 This work is organized as follows. After establishing some background and notation in Section 1,
we introduce in Section 2 a partial action $\hat{\alpha}$ on the spectrum of the unit fiber of a Fell bundle \mb over a discrete group.  
When \mb is the Fell bundle corresponding to a partial action
$\gamma$, then $\hat{\alpha}$
agrees with $\hat\gamma$, as defined in \cite[Section~7]{fa1} or
\cite{leb}, and when $\mb$ is the Fell bundle associated in \cite{aee} with the crossed-product by a Hilbert $C^*$-bimodule, then $\hat{\alpha}$ is the homeomorphism $\hat{h}$ discussed in \cite{kwa}. 
Following familiar lines,  we establish in Section 3 a bijective correspondence between the family of  $\hat{\alpha}$-invariant open sets in the spectrum of the unit fiber and the set of ideals in $\mb$ (Proposition \ref{bundid} and Proposition \ref{alphainv})
This enables us to show that, when $\hat{\alpha}$ is topologically free, its minimality is equivalent to the simplicity of $C^*_r(\mb)$ (Corollary \ref{cor:2.6}). We then go on to generalize  to our setting, in Theorem \ref{thm:gs},  some of the results of Giordano and Sierakowski in \cite{gs} concerning the connection among the exactness property, the residual intersection property,  the structure ideal of $\mb$, and that of $C^*_r(\mb)$.

Finally, Section 4 contains some applications to the theory of Fell bundles with commutative unit fiber.

\section{Preliminaries}\label{sec:prel}
In this section we establish some notation and recall some basic definitions and facts about the spectrum of a \cstar-algebra and the Rieffel correspondence. We refer the reader 
to \cite{rw} for further details.

If $A$ is a \cstar-algebra, we denote by $\cI(A)$ the lattice of ideals in $A$ and by $\prim A$ the primitive space of $A$. That is, 
$\prim A$ is the set of primitive ideals with the hull-kernel topology. The spectrum of $A$, which we denote by $\hat{A}$, consists of the 
unitary equivalence classes of irreducible representations of $A$ with the initial topology for the map
\begin{equation}
\label{sptop}
 k:\hat{A}\longrightarrow \prim(A),\mbox{ given by } k([\pi])=\ker\pi \mbox{ for all } [\pi]\in\hat{A}.
\end{equation}
That is, a subset $S$ of $\hat{A}$ is open if and only if $S=k^{-1}(O)$, where $O$ is open in $\prim A$.
We will usually drop the brackets and denote $[\pi]\in \hat{A}$ by $\pi$.

Suppose now that $A$ and $B$  are \cstar-algebras and that $X$ is an $A-B$ imprimitivity bimodule. 
We denote by $\langle\ ,\ \lrangle$ and $\langle\ ,\ \rrangle$ the left and right inner products on $X$, respectively. 

An irreducible 
representation $\pi:B\rightarrow B(\mh_\pi)$ induces an irreducible representation $\ind_X\pi$ of $A$ as follows.
Let $X\otimes_B\mh_\pi$ be the Hilbert space obtained as the completion of the algebraic tensor product $X\odot_B\mh_\pi$ with respect
to the norm induced by the inner product determined by
\begin{equation}
\label{iptens}
\langle x\otimes h, y\otimes k\rangle:= \langle \pi(\langle y,x\rrangle)h,k\rangle,
\end{equation}
for $x,y\in X$ and $h,k\in \mh_\pi$.

Then $\ind_X\pi: A\longrightarrow B(X\otimes_B\mh_\pi)$ is defined by 
\begin{equation}
 \ind_X\pi(a)(x\otimes h)=ax\otimes h,
\end{equation}
for $a\in A$, $x\in X$, and $h\in \mh_\pi$.

Since $\ind_X\pi$ is irreducible as well, the imprimitivity bimodule $X$ yields a map
\begin{equation}
\label{ind}
 \ind_X: \hat{B}\longrightarrow \hat{A}
\end{equation}
that turns out to be a homeomorphism.

The imprimitivity bimodule $X$ also yields the Rieffel correspondence 
\[h_X:\cI(B)\longrightarrow \cI(A),\] which is a lattice isomorphism determined by the equation
\begin{equation}
 \label{rcor}
 h_X(I) X= XI,\text{ for all } I\in\cI(B),
\end{equation}
where 
\[XI=\cspan \{xi:x\in X, i\in I\} \text{ and } h_X(I)X=\cspan \{jx:x\in X, j\in h_X(I)\}. \]

These constructions are connected by the relation (\cite[3.24]{rw})
\begin{equation}
 \label{kerind}
 \ker \ind_X \pi=h_X(\ker \pi).
\end{equation}

If $J$ is an ideal in $A$, we denote by  $P_J$  the canonical projection on $A/J$. Let $X_J$ be the set
\begin{equation}
\label{xj}
 X_J=\{\pi\in \hat{A}: \pi|_J\neq 0\}.
\end{equation}
Then the map $J\mapsto X_J$ is a bijection from $\cI(A)$ onto the topology on $\hat{A}$. 

Besides, the maps
\[r_J: X_J\longrightarrow \hat{J} \text{ and } q_J: \hat{A}\setminus X_J\longrightarrow \widehat{A/J}, \] 
 determined, respectively,  by
 \begin{equation}
 \label{rj}
  r_J(\pi)=\pi|_J \text{ and }  q_J(\pi)\circ P_J=\pi
\end{equation}
are homeomorphisms.

If $X$ is an $A-B$ imprimitivity bimodule and $J$ is an ideal in $B$, then $XJ=h_X(J)X$, and $X/XJ$ is an $A/h_X(J)- B/J$ imprimitivity bimodule. Furthermore, the diagram
\begin{equation}
\label{diag}
\xymatrix{
\widehat{B/J}\ar[r]^{\ind_{{ X/XJ}}} & \widehat{A/h_X(J)}\\
\hat{B}\setminus X_J\ar[u]^{q_J} \ar[r]_{\ind_X} & \hat{A}\setminus X_{h_X(J)}\ar[u]_{q_{h_X(J)}}}
\end{equation}
commutes.

\section{The partial action associated with a Fell bundle}\label{sec:pafb}
\begin{ntn}
Throughout this work  $\mb=(B_t)_{t\in G}$ will denote a Fell bundle  over a discrete
group $G$. 
We will make use of the usual notation:
\[X^*\!=\{x^*:x\in X\}\subseteq B_{t^{-1}},\  X_1X_2\cdots X_n=\cspan\{x_1x_2\cdots x_n:x_i\in X_i\}
\subseteq B_{t_1t_2\cdots t_n},\]
for $X\subseteq B_t$ and  $X_i\subseteq B_{t_i}$, where $t, t_i\in G$ and $i=1,\cdots,n$.

In this setting, $B_t$ is a Hilbert \cstar-bimodule over $B_e$, for left 
and right multiplication and inner products given by
\begin{equation}
 \langle b_1,b_2\lrangle= b_1b^*_2,\ \langle b_1,b_2\rrangle=b_1^*b_2.
\end{equation}

We denote by $C^*(\mb)$ the cross-sectional $C^*$-algebra of $\mb$, and by $C_c(\mb)$  the dense *-subalgebra  of compactly supported 
cross sections.

The map $E:C_c(\mb)\longrightarrow B_e$ consisting of evaluation at $e$ extends to a conditional expectation $E:C^*(\mb)\longrightarrow B_e$.

We next recall some definitions and results related to the reduced cross-sectional $C^*$-algebra of a Fell bundle.
Further details and proofs can be found in \cite{examen}.

Let $\ell^2(\mb)$ denote the right Hilbert $C^*$-module over $B_e$ consisting of those sections $\xi$
such that $\sum_{t\in G} \xi^*(t)\xi(t)$ converges in $B_e$.

Thus, $\ell^2(\mb)$ is the direct sum of the right $B_e$-Hilbert $C^*$-modules $\{B_t:t\in G\}$. Let $j_t:B_t\longrightarrow \ell^2(\mb)$ be the inclusion map. That is,  
\begin{equation}
\label{jayt}
j_t(b)=b\delta_t,\text{ for }t\in G\text{ and }b\in B_t,
\end{equation}
 where  
$b\delta_t(s)=\delta_{s,t}b$,  $\delta_{s,t}$ being the Kronecker delta. Then $j_t$ is adjointable, and its adjoint is
evaluation at $t$.

\par Each $b_t\in B_t$ defines an adjointable operator
$\Lambda_{b_t} \in\mathcal{L}(\ell^2(\mb))$, given by
\[\Lambda_{b_t}(\xi)(s)=b_t\xi(t^{-1}s),\  \forall \xi\in\ell^2(\mb),\ 
s\in G.\]

The reduced $C^*$-algebra $C^*_r(\mb)$ of the Fell bundle
$\mb$ is the $C^*$-subalgebra of $\mathcal{L}(\ell^2(\mb))$ generated
by  $\{\Lambda_{b}: b\in \mb\}$. The correspondence $b_t\mapsto \Lambda_{b_t}$ extends to a *-homomorphism
 \[\Lambda: C^*(\mb)\longrightarrow C^*_r(\mb) \]
verifying (\cite[3.6]{examen})
\begin{equation}
\label{kerlam}
\ker \Lambda= \{c\in C^*(\mb): E(c^*c)=0\}.
\end{equation}
 
 We will often view  $B_e$ as a  $C^*$-subalgebra of $C^*_r(\mb)$ by identifying $a\in B_e$ with
$\Lambda_a\in C^*_r(\mb)$.

We denote by $D_t$ the ideal in $B_e$ defined by $D_t=B_tB^*_t$. Since the structure described 
above makes $B_t$ into a $D_t-D_{t^{-1}}$ imprimitivity bimodule, $B_t$ yields, as in equation (\ref{ind}),
a homeomorphism
\[\ind_{B_t}:\hat{D}_{t^{-1}}\longrightarrow \hat{D_t}.\]

We will denote by $X_t$, $r_t$, and  $q_t$, respectively, the set $X_{\sss{D}_t}$ and the maps $r_{\sss{D}_t}$ and $q_{\sss{D}_t}$ defined in 
(\ref{xj}) and (\ref{rj}). Notice that $X_e=\hat{B_e}$. Finally, we
denote by $\hat{\alpha}_t$ the homeomorphism that makes the
diagram \[
\xymatrix{X_{t^{-1}}\ar[d]_{r_{t^{-1}}}\ar[r]^{\hat{\alpha}_t}&X_t\ar[d]^{r_t}\\
            \hat{D}_{t^{-1}}\ar[r]_{\ind_{B_t}}&\hat{D}_{t} }
\]
commute. That is,  
\begin{equation}
\label{alpha}
\hat{\alpha}_t: X_{t^{-1}}\longrightarrow X_t\text{ is given by } \hat{\alpha}_t=r^{-1}_t\circ \ind_{B_t}\circ r_{t^{-1}},
\end{equation}
for all $t\in G$.
\end{ntn} 
\begin{rk}
\label{rkalpha}
 If $\pi\in X_{t^{-1}}$ is a representation of $D_e$ on $\mh_\pi$, then $\hat{\alpha}_t(\pi)$ is the representation of $D_e$ on $B_t\otimes_{D_{t^{-1}}} \mh_\pi$ given by
 \begin{equation}
  \label{ext}
  \big(\hat{\alpha}_t(\pi)a\big)(b\otimes h)=ab\otimes h,
 \end{equation}
for all $a\in D_e$, $b\in B_t$, and $h\in \mh_\pi$.
\end{rk}
\begin{proof}
When $a \in D_t$ the result follows straightforwardly from the definition, and equation (\ref{ext}) clearly defines an extension of 
$\ind_{B_t}(\pi|_{D_{t^{-1}}})$ to a representation of $D_e$. 
\end{proof}

\begin{prop}\label{prop:pafb}
 Given a Fell bundle $\mb=(B_t)_{t\in G}$  over a discrete group $G$, let $\hat{\alpha}_t$ be the homeomorphism defined in 
 equation (\ref{alpha}), for $t\in G$.

 Then $\hat{\alpha}:=\big(\{X_t\}_{t\in
G},\{\hat{\alpha}_t\}_{t\in G}\big)$ is a partial action of $G$ on $\hat{B_e}$.  
\end{prop}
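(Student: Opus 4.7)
The partial action axioms to verify are (i) $\hat{\alpha}_e=\mathrm{id}_{X_e}$ with $X_e=\hat{B_e}$, and (ii) the composition law $\hat{\alpha}_{ts}$ extends $\hat{\alpha}_t\circ\hat{\alpha}_s$ for all $s,t\in G$. Axiom (i) is immediate: since $D_e=B_e$, we have $X_e=\hat{B_e}$, and $\mathrm{Ind}_{B_e}$ is the identity on $\hat{B_e}$ because $B_e$ is the standard $B_e$--$B_e$ imprimitivity bimodule.

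For axiom (ii), I would work with the concrete description in Remark \ref{rkalpha}. If $\pi\in\hat{B_e}$ is such that $\hat{\alpha}_s(\pi)\in X_{t^{-1}}$, then $\hat{\alpha}_t\circ\hat{\alpha}_s(\pi)$ is the representation of $B_e$ on $B_t\otimes_{D_{t^{-1}}}B_s\otimes_{D_{s^{-1}}}\mh_\pi$ by left multiplication, while $\hat{\alpha}_{ts}(\pi)$ acts analogously on $B_{ts}\otimes_{D_{(ts)^{-1}}}\mh_\pi$ provided $\pi\in X_{(ts)^{-1}}$. The natural intertwiner is induced by Fell-bundle multiplication, $\mu:B_t\otimes_{D_{t^{-1}}}B_s\to B_{ts}$, $b_t\otimes b_s\mapsto b_tb_s$. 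The inner-product computation $\langle b_t\otimes b_s,b_t'\otimes b_s'\rrangle=b_s^*b_t^*b_t'b_s'=(b_tb_s)^*(b_t'b_s')$ shows $\mu$ is isometric, and tensoring over $D_{s^{-1}}$ with $\mh_\pi$ transfers this to an isometry $V:B_t\otimes_{D_{t^{-1}}}B_s\otimes_{D_{s^{-1}}}\mh_\pi\to B_{ts}\otimes_{D_{(ts)^{-1}}}\mh_\pi$ that intertwines the two representations on the dense subspace of images of elementary tensors.

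The main obstacle is surjectivity of $V$, together with the domain compatibility that $\hat{\alpha}_s(\pi)\in X_{t^{-1}}$ forces $\pi\in X_{(ts)^{-1}}$. Both reduce to the Fell-bundle identity $B_tD_sB_t^*=D_t\cap D_{ts}$ (equivalently $B_s^*D_{t^{-1}}B_s=D_{s^{-1}}\cap D_{(ts)^{-1}}$). The inclusion $\subseteq$ is immediate. For $\supseteq$, given $d\in D_t\cap D_{ts}$ I would approximate $d$ by sums $\sum_{i,j}b_{t,i}b_{t,i}'^*c_{ts,j}c_{ts,j}'^*$ via approximate identities of $D_t$ and $D_{ts}$, then rewrite each summand as an element of $B_tD_sB_t^*$ using the basic Fell-bundle inclusions $B_t^*B_{ts}\subseteq B_s$ and $B_sB_{(ts)^{-1}}\subseteq B_sB_s^*B_t^*$. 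Once this identity is in hand, surjectivity of $V$ follows by approximating an arbitrary $b\otimes h\in B_{ts}\otimes_{D_{(ts)^{-1}}}\mh_\pi$ by $be_\lambda\otimes h$, where $e_\lambda$ is an approximate identity of $D_{s^{-1}}\cap D_{(ts)^{-1}}$, using $B_{ts}B_s^*\subseteq B_t$ to place $be_\lambda$ in $\overline{B_tB_s}$. The domain compatibility reduces, via the inner-product calculation $\langle \hat{\alpha}_s(\pi)(a)(b\otimes h),b'\otimes h'\rangle=\langle\pi(b'^*ab)h,h'\rangle$, to the statement that $\hat{\alpha}_s(\pi)|_{D_{t^{-1}}}=0$ iff $\pi|_{B_s^*D_{t^{-1}}B_s}=0$, and then to the same identity.
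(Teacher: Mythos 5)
Your proposal is correct, and its core construction --- the map $b_t\otimes b_s\otimes h\mapsto b_tb_s\otimes h$, shown to preserve inner products on elementary tensors --- is exactly the intertwiner $U$ in the paper's proof. The two arguments diverge at the technical steps, and yours is a genuinely different (more computational) route. For domain compatibility the paper argues contrapositively using only the easy inclusion $B_t^*D_{s^{-1}}B_t\subseteq D_{(st)^{-1}}$; your reduction to the two-sided identity $B_tD_sB_t^*=D_t\cap D_{ts}$ proves more than that step needs, but the identity is true, and a cleaner path to its hard inclusion than your approximate-identity rewriting is
$D_t\cap D_{ts}=\overline{D_tD_{ts}}=\overline{B_t(B_t^*B_{ts})B_{ts}^*}\subseteq\overline{B_tB_sB_s^*(B_sB_{ts}^*)}\subseteq B_tD_sB_t^*$,
using $B_t^*B_{ts}\subseteq B_s$, $B_s=\overline{B_sB_s^*B_s}$ and $B_sB_{ts}^*\subseteq B_t^*$. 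The real divergence is surjectivity: the paper disposes of it in one line by noting that the image of $U$ is a nonzero subspace invariant under the irreducible representation $\hat{\alpha}_{st}(\pi)$, hence is everything; your route via the hard inclusion, an approximate identity of $D_{s^{-1}}\cap D_{(ts)^{-1}}$, and nondegeneracy of $\pi$ on that ideal (so that $be_\lambda\otimes h\to b\otimes h$) is longer but equally valid, and it buys a structural fact the paper's argument hides, namely that $\overline{B_tB_s}$ is precisely the imprimitivity bimodule over $D_t\cap D_{ts}$ implementing the composite $\hat{\alpha}_t\hat{\alpha}_s$. One point you should make explicit in a write-up: well-definedness of $V$ on the balanced tensor product (the paper devotes a separate ``balanced'' computation to this); your inner-product calculation does settle it, but only if you define $V$ on the unbalanced algebraic tensor product and pass to the separated completion, which is worth saying.
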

\begin{proof}
Clearly, $\hat{\alpha}_t$ is a homeomorphism between open subsets of $X$, so it remains to show that $\hat{\alpha}_{st}$ extends 
$\hat{\alpha}_s\hat{\alpha}_t$, for all $s,t\in G$.

We first show that $\dom \hat{\alpha}_s\hat{\alpha}_t \subseteq \dom\hat{\alpha}_{st}$. Let $\pi\in \dom \hat{\alpha}_s\hat{\alpha}_t $, and assume that \mbox{$\pi\not\in \dom\hat{\alpha}_{st}$}. 
That is, $\pi|_{D_{(st)^{-1}}}=0$. We will show that this implies that  
 $\hat{\alpha}_t(\pi)|_{D_{s^{-1}}}= 0$, which contradicts the fact that $\pi\in \dom \hat{\alpha}_s\hat{\alpha}_t $.
 
 In fact, let $d\in D_{s^{-1}}$. Then, for $b\in B_t$ and $h\in \mh_\pi$, we have
 \begin{align*}
  \|\hat{\alpha}_t(\pi)(d)(b\otimes h)\|^2 &=\langle db\otimes h,db\otimes h\rangle\\
  &=\langle \pi(b^*d^*db)h,h\rangle\\
  &=0,
 \end{align*}
because $b^*d^*db\in B^*_tD_{s^{-1}}B_t=B^*_tB^*_sB_sB_t\subseteq B^*_{st}B_{st}=D_{(st)^{-1}}$.

We now show that $\hat{\alpha}_{st}=\hat{\alpha}_s\hat{\alpha}_t$ on  $\dom \hat{\alpha}_s\hat{\alpha}_t $.
Namely, we will show that if  $\pi\in \dom \hat{\alpha}_s\hat{\alpha}_t $ is a representation on $\mh_\pi$, then the map
\[U: B_s\otimes_{\sss{D_{s^{-1}}}}\!B_t\otimes_{\sss{D_{t^{-1}}}}\! \mh_\pi\longrightarrow B_{st}\otimes_{\sss{D_{(st)^{-1}}}}\! \mh_\pi,\]
defined by 
\[U(b_s\otimes b_t\otimes h)=b_sb_t \otimes h,\] 
for $b_s\in B_s$, $b_t\in B_t$, and $h\in \mh_\pi$, is a unitary 
operator intertwining $\hat{\alpha}_s\hat{\alpha}_t(\pi)$ and $\hat{\alpha}_{st}(\pi)$.

In order to check that the definition of $U$ makes sense, first notice that

\begin{align*}
 B_s\otimes_{\sss{D_{s^{-1}}}}\!B_t\otimes_{\sss{D_{t^{-1}}}}\! \mh_\pi&=B_s\otimes_{\sss{D_{s^{-1}}}}\!D_{s^{-1}}B_t\otimes_{\sss{D_{t^{-1}}}}\! \mh_\pi\\
&=B_s\otimes_{\sss{D_{s^{-1}}}}\! D_{s^{-1}}(B_tB_t^*B_t) \otimes_{\sss{D_{t^{-1}}}}\! \mh_\pi \\  
&=B_s\otimes_{\sss{D_{s^{-1}}}}\!B_t(B_t^*D_{s^{-1}}B_t)\otimes_{\sss{D_{t^{-1}}}}\! \mh_\pi.
\end{align*}
This implies that the map
\[\tilde{U}: B_s\times B_t\times\mh_\pi \longrightarrow B_{st}\otimes_{\sss{D_{(st)^{-1}}}}\! \mh_\pi\]
defined by $\tilde{U}(b_s,b_t,h)=b_sb_t\otimes b_{st}$  is balanced: given $b_s\in B_s$, $b_t\in B_t$, $e\in B_t^*D_{s^{-1}}B_t$, 
$c\in D_{t^{-1}}$,   
 and $h\in \mh_\pi$, we have that $ec \in B_t^*D_{s^{-1}}B_tD_{t^{-1}}=B_t^*D_{s^{-1}}B_t\subseteq D_{(st)^{-1}}$.

 Therefore,
 \begin{align*}
  \tilde{U}(b_s,b_tec,h)&=b_sb_tec\otimes h=b_sb_t\otimes \pi(ec)h\\
  &=b_sb_t\otimes \pi(e)\pi(c)h\\
  &=b_sb_te\otimes \pi(c)h\\
  &=\tilde{U}(b_s,b_t, \pi(c)h).
 \end{align*}

 Besides, $U$ is an isometry because if $b_s,c_s\in B_s$, $b_t,c_t\in B_t$, and $h,h'\in \mh_\pi$, then
 \begin{align*}
  \langle b_s\otimes b_t\otimes h, c_s\otimes c_t\otimes h'\rangle &=\langle \big(\hat{\alpha}_t(\pi)(c^*_sb_s)\big)(b_t\otimes h),
  c_t\otimes h'\rangle\\
  &= \langle c_s^*b_sb_t\otimes h, c_t\otimes h'\rangle\\
  &=\langle \pi(c_t^*c_s^*b_sb_t)h,h'\rangle\\
  &=\langle b_sb_t\otimes h, c_sc_t\otimes h'\rangle\\
  &=\langle U(b_s\otimes b_t\otimes h), U(c_s\otimes c_t\otimes h')\rangle.
  \end{align*}
Furthermore,  $U$ is onto because its image is a non-zero $\hat{\alpha}_{st}(\pi)$-invariant subspace of $B_{st}\otimes \mh$. Finally, it 
is apparent that $U$ intertwines 
$\hat{\alpha}_s\hat{\alpha}_t(\pi)$ and $\hat{\alpha}_{st}(\pi)$.
 \end{proof}
\begin{df}
 Let $\mb$ be a Fell bundle over a discrete group $G$. The partial action $\hat{\alpha}$ in Proposition \ref{prop:pafb} will be called the partial action associated 
 with $\mb$.
\end{df}
\begin{ex}\label{cphcb}{\rm{\bf Crossed products by Hilbert $C^*$-bimodules.}
 When $\mb$ is the Fell bundle associated to a Hilbert $C^*$-bimodule $X$ over a $C^*$-algebra $A$ as in \cite[2.6]{aee}, the associated partial action $\hat{\alpha}$ is the partial homeomorphism $\hat{h}$ 
 discussed in \cite{kwa}. When the $C^*$-algebra $A$ is commutative it also agrees with the partial homeomorphism 
 induced by the partial action $\theta$ in \cite[1.9]{ae1}.}
\end{ex}
\label{pcp}

\begin{ex}{\rm {\bf Partial crossed products.}
\par If $\gamma=(\{\gamma_t\}_{t\in G},\{D_t\}_{t\in G})$ 
 is a partial action of a discrete group $G$ on a $C^*$-algebra
 $A$, then the Fell bundle $\mb_\gamma$ associated with $\gamma$ has
 fibers $B_t=\{t\}\times D_t$ with the obvious structure of Banach
 space, and product and involution given by: 
\begin{gather*}
(r,d_r)(s,d_s)=(rs,\gamma_r(\gamma_{r^{-1}}(d_r)d_s)),\\
(r,d_r)^*=(r^{-1},\gamma_{r^{-1}}(d^*_r)).
\end{gather*}
The unit fiber of $\mb_\gamma$ gets identified with $A$ in the obvious way.

The partial action $\gamma$ induces a partial action  $\hat{\gamma}$ on $\hat{A}$ that was defined in \cite[\S 7]{fa1} and
\cite{fa2} and further discussed in \cite{leb}.   The partial action  $\hat{\gamma}$ is given by
\[\hat{\gamma}_t(\pi)=\pi\circ \gamma_{t^{-1}}\text{ for } \pi\in \hat{A},\]
and it agrees with the partial action associated with the Fell bundle $\mb_\gamma$. In fact, it is easily 
checked that, if $\pi\in \hat{D}_{t^{-1}}$ is a representation on a Hilbert space $\mh_\pi$, then the map
\[U:B_t\otimes_{D_{t^{-1}}}\mh_\pi\longrightarrow\mh_\pi\text{, determined by } U((t,d_t)\otimes h)=\pi(\gamma_{t^{-1}}(d_t))(h),\]
for $d_t\in D_t$, $t\in G$, and $h\in \mh_\pi$, is a unitary operator intertwining $\ind_{B_t}\pi$ and $\pi\circ\gamma_{t^{-1}}$.}

\end{ex}

\begin{ex}
\label{cuf}
{\rm  {\bf Fell bundles with commutative unit fiber}. We now assume that the Fell bundle \mb has commutative unit fiber, that is, 
$B_e=C_0(X)$, for a locally compact Hausdorff space $X$. 
We identify $X$ with $\hat{B_e}$ in the usual way: $x\in X$ is viewed as 
$[\pi_x]\in \hat{B_e}$, where $\pi_x$ is evaluation at $x$. 

If $I_x=\ker \pi_x$, then $x\in X_{t^{-1}}$ if and only if 
$B^*_tB_t\not \subseteq I_x$.
That is (\cite[3.3]{rw}), $x\in X_{t^{-1}}$ if and only if $B_t I_x\neq B_t$.

Therefore, if $b_t(x)$ denotes the image of an element $b_t$ of $B_t$ under the quotient map on $B_t/B_tI_x$, then
\[\hat{B_e}\setminus X_{t^{-1}}=\{x\in X: b_t(x)=0 \text{ for all }b_t\in B_t\}.\]

Besides, if $x\in X_{t^{-1}}$, we have,  by (\ref{kerind}),
\[I_{\hat{\alpha}_t(x)}B_t=I_{\hat{\alpha}_t(x)}D_tB_t=\ker(\ind_{B_t}\pi_x)B_t=B_t\ker \pi_x=B_tI_x.\]
Therefore,
\begin{equation}
 \label{abt}
(ab_t)(x)=\begin{cases}a(\hat{\alpha}_t(x))b_t(x)&\textrm{ if }x\in
X_{t^{-1}}\\ 0&\textrm{ otherwise,}\end{cases}
\end{equation}
for $a\in B_e$ and $b_t\in B_t$.}
\end{ex}

 \section{Topological Freeness and Ideals in the Cross-Sectional C*-algebras}\label{sec:rideals}

In this section we show that some well-known results relating topological freeness and the ideal structure of crossed products carry over to our setting.

\begin{prop}
 \label{htort}
 Let $\mb=(B_t)_{t\in G}$ be a Fell bundle over a discrete group $G$, and let $\rho$ be a representation of $\cmb$ on a Hilbert space 
 $\mk$. Suppose that $\sigma:B_e\longrightarrow B(\mh)$ is an irreducible subrepresentation of $\rho|_{B_e}$,  and let 
 $\mh_t=\cspan\  \rho(B_t)\mh$, for each $t\in G$. Then
 \begin{enumerate}
  \item  $\mh_t$ is $\rho(B_e)$-invariant for all $t\in G$.
  \item $\mh_t=\{0\}$ if $\sigma\not\in X_{t^{-1}}$, and $\mh_t\perp\mh$ if $\sigma\not\in X_{t}$.
  \item If $\sigma\in X_{t}\cap X_{t^{-1}}$ and $\hat{\alpha}_t(\sigma)\neq \sigma$, then  $\mh_t\perp\mh$.
  \end{enumerate}
\end{prop}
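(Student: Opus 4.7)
The plan is to dispatch (i) and (ii) by direct algebraic computation, and to reduce (iii) to Schur's lemma by identifying $\mh_t$ with the Hilbert space of $\hat{\alpha}_t(\sigma)$. A standing observation I will use throughout is that, since $\sigma$ is a subrepresentation of $\rho|_{B_e}$ and $B_e$ is self-adjoint, $\mh$ is a \emph{reducing} subspace for $\rho|_{B_e}$; hence $P_\mh$ commutes with $\rho(B_e)$ and $\rho(a)|_\mh=\sigma(a)$ for every $a\in B_e$.

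For (i), the inclusion $B_eB_t\subseteq B_t$ gives $\rho(a)\rho(b_t)h=\rho(ab_t)h\in\rho(B_t)\mh$ for all $a\in B_e$, $b_t\in B_t$, $h\in\mh$, so $\rho(B_e)$ stabilizes $\rho(B_t)\mh$ and hence its closure $\mh_t$. For the first half of (ii), if $\sigma\notin X_{t^{-1}}$ then $\sigma$ vanishes on $D_{t^{-1}}=B_t^*B_t$, so $\|\rho(b_t)h\|^2=\langle\sigma(b_t^*b_t)h,h\rangle=0$, whence $\mh_t=\{0\}$. For the second half, if $\sigma\notin X_t$, pick an approximate unit $\{e_\lambda\}$ for $D_t=B_tB_t^*$; then $\sigma(e_\lambda)=0$ combined with the fact that $P_\mh$ commutes with $\rho(e_\lambda)$ gives $\rho(e_\lambda)P_\mh=0$, and since $e_\lambda b_t\to b_t$ (because $D_tB_t=B_t$) one concludes $P_\mh\rho(b_t)=\lim_\lambda P_\mh\rho(e_\lambda)\rho(b_t)=\lim_\lambda \rho(e_\lambda)P_\mh\rho(b_t)=0$, hence $\mh_t\perp\mh$.

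The main content is (iii). I will define the map $U:B_t\odot_{D_{t^{-1}}}\mh\to\mh_t$ by $U(b\otimes h)=\rho(b)h$. Balancing over $D_{t^{-1}}$ is automatic, since for $d\in D_{t^{-1}}\subseteq B_e$ one has $\rho(bd)h=\rho(b)\rho(d)h=\rho(b)\sigma(d)h$. A short calculation using (\ref{iptens}) together with $\rho(c)^*\rho(b)=\rho(c^*b)$ and $c^*b\in D_{t^{-1}}$ shows that $U$ is isometric; since its range has dense linear span in $\mh_t$ by the very definition of $\mh_t$, $U$ extends uniquely to a unitary $U:B_t\otimes_{D_{t^{-1}}}\mh\to\mh_t$. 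The key observation is that for any $a\in B_e$,
\[\rho(a)U(b\otimes h)=\rho(ab)h=U(ab\otimes h),\]
and by Remark \ref{rkalpha} the right-hand side is exactly the action of $\hat{\alpha}_t(\sigma)(a)$ on $b\otimes h$. Thus $U$ intertwines $\hat{\alpha}_t(\sigma)$ and $\rho|_{\mh_t}$ as representations of $B_e$, and in particular $\rho|_{\mh_t}$ is irreducible and equivalent to $\hat{\alpha}_t(\sigma)$.

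To conclude, I observe that $P_\mh|_{\mh_t}:\mh_t\to\mh$ is an intertwiner between the irreducible $B_e$-representations $\rho|_{\mh_t}\cong\hat{\alpha}_t(\sigma)$ and $\rho|_\mh=\sigma$, because $P_\mh$ commutes with $\rho(B_e)$. Since by hypothesis $\hat{\alpha}_t(\sigma)\neq\sigma$ in $\hat{B_e}$, these classes are inequivalent, so Schur's lemma forces $P_\mh|_{\mh_t}=0$, i.e., $\mh_t\perp\mh$. The step I expect to be the main obstacle is the verification in the previous paragraph that the extension to $B_e$ of the induced representation supplied by Remark \ref{rkalpha} genuinely matches the $B_e$-action on $\mh_t$ transported through $U$; once the intertwining is nailed down, the Schur argument is immediate.
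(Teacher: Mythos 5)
Your proof is correct and takes essentially the same route as the paper: (i) and (ii) by direct computation (where the paper factors elements of $B_t$ as $xb_ty$ with $x\in D_t$, $y\in D_{t^{-1}}$, you use an approximate unit for $D_t$ --- an equivalent device), and (iii) via the very same unitary $U(b\otimes h)=\rho(b)h$ identifying the subrepresentation of $\rho|_{B_e}$ on $\mh_t$ with $\hat{\alpha}_t(\sigma)$. The only difference is at the final step, where the paper cites the orthogonality of inequivalent irreducible subrepresentations from the literature while you derive it directly by applying Schur's lemma to the intertwiner $P_{\mh}|_{\mh_t}$.
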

\begin{proof}
 Statement (i) is apparent. As for (ii),  consider the orthogonal decompositions
 \[\mk=\mh\oplus \mh^\perp,\  \rho|_{B_e}= \sigma \oplus \sigma^\perp.\]
Notice that any element in $B_t$ can be written as $xb_ty$, where $x\in D_t$, $b_t\in B_t$, and $y\in D_{t^{-1}}$. Besides, if $\sigma\not\in X_{t^{-1}}$, then $\sigma|_{D_{t^{-1}}}=0$, and, 
for any $h\in \mh$,
\[\rho(xb_ty)(h)=\rho(xb_t)(\sigma(y)(h)+\sigma^\perp(y)h)=0,\]
which shows that $\mh_t=\{0\}$.

If $\sigma \not \in X_t$, then, for $x,b_t,y$ as above, and $h,h'\in \mh$,
\begin{align*}
 \langle \rho(xb_ty)h,h'\rangle & =\langle \rho(b_ty)h, \rho(x^*)h'\rangle\\
 &=\langle \rho(b_ty)h, \sigma(x^*)h'+\sigma^\perp(x^*)h'\rangle\\
 &=\langle\rho(b_ty)h, \sigma(x^*)h'\rangle\\
 &=0,
 \end{align*}
which completes the proof of (ii).
In order to prove (iii), we now assume that $\sigma\in X_t\cap X_{t^{-1}}$. Let $\sigma_t$ denote the subrepresentation of 
$\rho|_{B_e}$ on $\mh_t$, that is, 
\[\sigma_t(c)h_t=\rho(c)h_t,\]
for all $c\in B_e$ and $h_t\in \mh_t$.
Then the map
\[U:B_t\otimes_{D_{t^{-1}}}\mh\longrightarrow \mh_t \text{ given by }U(b_t\otimes h)=\rho(b_t)h\]
is a unitary operator intertwining $\sigma_t$ and $\hat{\alpha}_t(\sigma)$. In fact, if $b_t, c_t\in B_t$, and $h,k\in \mh$, then
\[\langle b_t\otimes h, c_t\otimes k\rangle=\langle \sigma(c_t^*b_t)h,k\rangle =\langle \rho(c_t^*b_t)h,k\rangle =\langle \rho(b_t)h,\rho(c_t)k\rangle.\]

Therefore, if $\sigma \neq \hat{\alpha}_t(\sigma)$, then $\sigma$ and $\sigma _t$ are irreducible  non-equivalent subrepresentations of $\rho|_{B_e}$. It now follows from 
\cite[12.15]{antleb} that $\mh$ and $\mh_t$ are orthogonal.

\end{proof}

\begin{df}
 Recall from \cite[2.2]{elq} that a partial action $\theta$ of a discrete group $G$ on a locally compact topological space $X$ is  
  {\em topologically free} if for any finite subset $S$ of $G\setminus \{e\}$ the set 
  \[\bigcup_{t\in S} \{x\in \dom \theta_t:\theta_t(x)=x\}\]
has empty interior. Equivalently, $\theta$ is topologically free if 
  the set
 \[F_t=\{x\in \dom \theta_t:\theta_t(x)=x\}\]
 has empty interior for any $t$ in $G\setminus \{e\}$.
 
\end{df}

\begin{thm}\label{thm:topfree} Suppose that $\mb=(B_t)_{t\in G}$ is a Fell bundle over a discrete group $G$, $A$ is a $C^*$-algebra,
and
\[\phi: C^*(\mb)\longrightarrow A \]
 is a *-homomorphism, and let $J:=\ker\phi\cap B_e$.
 
 If the partial action $\hat{\alpha}$ associated with $\mb$ is topologically free on 
 $\hat{B_e}\setminus X_J$, then
  \begin{equation}\label{ineq}
    \norm{\phi(c)}\geq\norm{\phi(E(c))},\qquad \forall c\in
    C^*(\mb).\end{equation}  
    \end{thm}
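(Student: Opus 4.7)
The plan is as follows. By density of $C_c(\mb)$ in $\cmb$ and continuity of $\phi$ and $E$, it suffices to establish (\ref{ineq}) for $c\in C_c(\mb)$; fix such a $c$ with finite support $F$, assume $e\in F$, write $a:=E(c)=c(e)\in B_e$, and fix $\epsilon>0$. The goal is to build a representation $\pi$ of $\cmb$ which factors through $\phi$ (so automatically $\|\pi(c)\|\le\|\phi(c)\|$) and simultaneously satisfies $\|\pi(c)\|>\|\phi(a)\|-\epsilon$.

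First I select a good $\sigma$. Since the map $B_e/J\hookrightarrow A$ induced by $\phi$ is an injective, hence isometric, $\ast$-homomorphism, the homeomorphism $q_J\colon\hat{B_e}\setminus X_J\to\widehat{B_e/J}$ gives $\|\phi(a)\|=\sup\{\|\sigma(a)\|:\sigma\in\hat{B_e}\setminus X_J\}$, so the set $U:=\{\sigma\in\hat{B_e}\setminus X_J:\|\sigma(a)\|>\|\phi(a)\|-\epsilon\}$ is a non-empty open subset. By the assumed topological freeness of $\hat\alpha$ on $\hat{B_e}\setminus X_J$, the finite union $\bigcup_{t\in F\setminus\{e\}}F_t$, with $F_t=\{\sigma\in X_{t^{-1}}:\hat\alpha_t(\sigma)=\sigma\}$, has empty interior, so its complement meets $U$; I pick $\sigma$ in this intersection. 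For each $t\in F\setminus\{e\}$ one then has either $\sigma\notin X_{t^{-1}}$, or else $\sigma\in X_{t^{-1}}$ with $\hat\alpha_t(\sigma)\neq\sigma$.

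Next I assemble $\pi$. Fix a unit vector $\xi\in\mh_\sigma$ and let $\omega=\langle\sigma(\cdot)\xi,\xi\rangle$, a pure state on $B_e$ vanishing on $J$. Transport $\omega$ to a pure state on $\phi(B_e)\cong B_e/J$, extend by Hahn--Banach to a state $\tilde\omega$ on $A$, and let $\pi$ be the GNS representation of $\tilde\omega\circ\phi$ on $\cmb$, with cyclic vector $\Omega$. Because $\tilde\omega\circ\phi$ vanishes on $\ker\phi$, the representation $\pi$ factors through $\phi$, yielding $\|\pi(c)\|\leq\|\phi(c)\|$. The cyclic subspace $\mh:=\overline{\pi(B_e)\Omega}$ is $\pi(B_e)$-invariant, and GNS uniqueness for the pure state $\omega$ identifies $\pi|_{\mh}$ with the irreducible $\sigma$. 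Proposition \ref{htort} now applies to $\pi$ with this copy of $\sigma$: part (ii) gives $\mh_t=\{0\}$ if $\sigma\notin X_{t^{-1}}$ and $\mh_t\perp\mh$ if $\sigma\notin X_t$, while in the remaining case $\sigma\in X_t\cap X_{t^{-1}}$ with $\hat\alpha_t(\sigma)\neq\sigma$, part (iii) gives $\mh_t\perp\mh$.

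Finally, the estimate. For any unit vector $\eta\in\mh$ the decomposition $\pi(c)\eta=\pi(c(e))\eta+\sum_{t\in F\setminus\{e\}}\pi(c(t))\eta$ is orthogonal, with the first summand in $\mh$ and the remaining terms in $\mh^\perp$; hence $\|\pi(c)\eta\|\geq\|\pi(c(e))\eta\|=\|\sigma(a)\eta\|$. Taking the supremum over $\eta\in\mh$ yields $\|\pi(c)\|\geq\|\sigma(a)\|>\|\phi(a)\|-\epsilon$, and combined with $\|\pi(c)\|\leq\|\phi(c)\|$ and the arbitrariness of $\epsilon$, inequality (\ref{ineq}) follows. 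The main obstacle is the assembly step: arranging that the GNS of $\tilde\omega\circ\phi$ genuinely contains the specific irreducible $\sigma$ chosen from $U$ as a subrepresentation on a subspace where Proposition \ref{htort} applies, while keeping the factorization through $\phi$ that supplies the bound $\|\pi(c)\|\le\|\phi(c)\|$; the purity of $\omega$ together with Hahn--Banach extension handles this, and is precisely the point where the dynamical input (topological freeness selecting $\sigma$) is welded to the algebraic input (the homomorphism $\phi$).
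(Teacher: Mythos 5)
Your proof is correct and follows essentially the same route as the paper: lower semicontinuity of $\sigma\mapsto\|\sigma(E(c))\|$ plus topological freeness to select $\sigma$ off the fixed-point sets, a GNS construction producing a representation that factors through $\phi$ and contains $\sigma$ as a subrepresentation of its restriction to $B_e$, and then Proposition \ref{htort} to isolate the $e$-term. The only (immaterial) differences are that the paper extends to a pure state on $\phi(C^*(\mb))$ and uses the compression $Q\rho(\cdot)Q^*$, whereas you use an arbitrary Hahn--Banach extension (purity of $\omega$ on $B_e$ is indeed all that is needed) and a vector-norm lower bound.
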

\begin{proof}
 
 Since it suffices to show that (\ref{ineq}) holds when $c$ belongs to the dense \mbox{$\star$-subalgebra} $C_c(\mb)$ of compactly 
 supported 
 cross sections, we assume that 
\[c= \sum_{t\in \supp(c)} c(t)\delta_t,  \]
 where $\supp(c)$ is a finite subset of $G$.
 In order to show the statement, we will prove that
 \begin{equation}\label{ineqep}
    \norm{\phi(c)}\geq\norm{\phi(E(c))}-\epsilon,
    \end{equation} 
 for all $\epsilon >0$. 
 
 Fix $\epsilon >0.$
 Note  that
 \begin{gather*}
  \label{quotnorm}
  \|\phi(E(c))\|=\|E(c)+J\|_{B_e/J}=\max\{\|\tau (E(c)+J)\|:\tau\in \widehat{B_e/J}\}\\
  =\max\{\|\sigma (E(c))\|:\sigma\in \hat{B_e}\setminus X_J\}.
   \end{gather*}
   
   Besides, since the map $\sigma\mapsto \|\sigma (E(c))\|$ is lower semicontinuous on $\hat{B_e}\setminus X_J$ (\cite[A30]{rw}), 
   we can choose a set $V$  that  is open in $\hat{B_e}\setminus X_J$ and such that
   \begin{equation}
    \label{holdsonv}
    \|\sigma(E(c)\|\geq\|\phi(E(c))\|-\epsilon,
    \end{equation}
    for all $\sigma\in V$.

    Now, since $\hat{\alpha}$ is topologically free on $\hat{B_e}\setminus X_J$, the set
    \begin{equation}
    \label{F}
         F=\bigcup_{\substack{t\in \supp(c)\\ t\neq e}} \{\sigma \in X_{t^{-1}}: \hat{\alpha}_t(\sigma)=\sigma\} 
    \end{equation}   
    does not contain $V$. Thus, we can choose a representation $\sigma\in V$ on a Hilbert space $\mh$ 
    such that $\sigma\not\in F$.
    
    Let $\tilde{\phi}:B_e/J\rightarrow \phi(B_e)$ be the canonical
    isomorphism induced by $\phi|_{B_e} $, and let $\psi_0$ be a state
    of $\phi(B_e)$ 
    associated with the irreducible representation 
    $q_J(\sigma)\circ(\tilde{\phi})^{-1}$, where $q_J$ is as in (\ref{rj}). Extend $\psi_0$ to a pure state $\psi$ on $\phi(\cmb)$.  The GNS construction for $\psi$ 
    yields a representation $\pi$ 
    of $\phi(\cmb)$ on a Hilbert space $\mk$ containing a closed subspace $\mh$ such that 
    $q_J(\sigma)\circ(\tilde{\phi})^{-1}$ is the subrepresentation of $\pi|_{\phi(B_e)}$ on $\mh$.

    We now define $\rho:\cmb\longrightarrow B(K)$  by $\rho=\pi\circ \phi.$
    If $Q\in B(\mk,\mh)$ is the orthogonal projection on $\mh$, then
    \begin{equation}
     \label{qpiq}
     Q\rho(b)Q^*= Q\pi(\phi(b))Q^*=Q(\pi(\tilde{\phi}(b+J))Q^*= q_J(\sigma)(b+J)=\sigma(b),
     \end{equation}
for all $b\in B_e$, which shows that $\sigma$ is an irreducible subrepresentation of $\rho|_{B_e}$. 

We now set $\mh_t=\cspan \rho(B_t)(\mh)$. By Proposition 
\ref{htort} we have, since $\sigma\not\in F$, that $\mh_t \perp \mh$ for all $t\in \supp(c)$ such that $t\neq e$.

Therefore,
\begin{gather*}
 \|\phi(c)\|\geq \|\pi\circ\phi(c)\| =\|\rho(c)\|\geq\|Q\rho(c)Q^*\|
 =\|Q\rho(E(c))Q^*\|\\
 = \|\sigma(E(c))\|
 \geq \|\phi(E(c))\|-\epsilon.
\end{gather*}

\end{proof}

\begin{cor} 
\label{ibe}Suppose that $\mb=(B_t)_{t\in G}$ is a Fell bundle over a discrete group $G$ such that the partial action 
associated with $\mb$ is topologically free. Then
\begin{enumerate}
 \item If $I$ is an ideal in $C^*(\mb)$ such that $I\cap B_e=\{0\}$, then $I\subset \ker \Lambda$, where  
 \[\Lambda: C^*(\mb)\longrightarrow C^*_r(\mb) \]
 is the canonical surjective map. 
 \item If $I$ is an ideal in $C_r^*(\mb)$ such that $I\cap B_e=\{0\}$, then $I=\{0\}$. Consequently, 
 a representation of $C_r^*(\mb)$ is faithful if and only if its restriction to $B_e$ is faithful.
\end{enumerate} 
\end{cor}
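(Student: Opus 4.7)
The plan is to derive both assertions directly from Theorem \ref{thm:topfree}, using that $\Lambda$ restricted to $B_e$ is injective (since $B_e$ is identified with its image $\Lambda(B_e)\subseteq C^*_r(\mb)$ via $a\mapsto \Lambda_a$).

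For part (i), I would consider the quotient map $\phi\colon C^*(\mb)\to C^*(\mb)/I$. Since $\ker\phi\cap B_e=I\cap B_e=\{0\}$, the hypothesis of Theorem \ref{thm:topfree} is satisfied with $J=\{0\}$; here topological freeness on all of $\hat{B_e}$ is exactly what is assumed. The conclusion of that theorem then gives $\|\phi(c)\|\ge\|\phi(E(c))\|$ for every $c\in C^*(\mb)$. Given $c\in I$, apply this to $c^*c\in I$: since $\phi(c^*c)=0$ we get $\phi(E(c^*c))=0$, and since $E(c^*c)\in B_e$ and $\ker\phi\cap B_e=\{0\}$, it follows that $E(c^*c)=0$. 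By equation (\ref{kerlam}) this means precisely that $c\in\ker\Lambda$.

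For part (ii), I would lift the problem to $C^*(\mb)$. Set $\tilde I:=\Lambda^{-1}(I)$, which is an ideal in $C^*(\mb)$ containing $\ker\Lambda$. Under the identification $a\leftrightarrow\Lambda_a$, we have $\tilde I\cap B_e=I\cap B_e=\{0\}$, so part (i) yields $\tilde I\subseteq\ker\Lambda$. Applying the surjection $\Lambda$ then gives
\[
I=\Lambda(\tilde I)\subseteq\Lambda(\ker\Lambda)=\{0\}.
\]
The ``consequently'' clause is immediate: any representation $\rho$ of $C^*_r(\mb)$ whose restriction to $B_e$ is faithful has $\ker\rho\cap B_e=\{0\}$, so (ii) forces $\ker\rho=\{0\}$; the converse is obvious.

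Since Theorem \ref{thm:topfree} is doing essentially all the work, no substantial obstacle is expected. The only point requiring a little care is keeping the two copies of $B_e$ straight (as the unit fibre inside $C^*(\mb)$, and as the subalgebra $\Lambda(B_e)\subseteq C^*_r(\mb)$) so that the hypothesis $I\cap B_e=\{0\}$ passes correctly to $\ker\phi\cap B_e=\{0\}$ in both the $C^*(\mb)$ and the $C^*_r(\mb)$ settings; once this is done, the argument is essentially a short bookkeeping exercise around the two key identities (\ref{kerlam}) and the inequality of Theorem \ref{thm:topfree}.
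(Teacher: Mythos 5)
Your proposal is correct and follows essentially the same route as the paper: both parts apply Theorem \ref{thm:topfree} to the quotient map $C^*(\mb)\to C^*(\mb)/I$ to conclude $E(I)=\{0\}$ and hence $I\subseteq\ker\Lambda$ via (\ref{kerlam}), and part (ii) is deduced by pulling $I$ back through $\Lambda$ exactly as in the paper. Your extra care with $c^*c$ and with the two copies of $B_e$ only makes explicit what the paper leaves implicit.
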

\begin{proof}
(i) Since the restriction to $B_e$ of the quotient map $P_I:C^*(\mb)\longrightarrow C^*(\mb)/I$ is injective, we have, by Theorem 
\ref{thm:topfree}, that
\[\|P_I(E(c))\|\leq \|P_I(c)\|\text{, for all }c\in C^*(\mb).\]
Consequently, $E(I)\subseteq I\cap B_e=\{0\}$, and  $I\subset \ker \Lambda $ (see Equation (\ref{kerlam})).

(ii) Let $J=\Lambda^{-1}(I)$. Then $J \vartriangleleft C^*(B)$ and $\Lambda(J\cap B_e)\subseteq I\cap B_e=\{0\}$. 

Therefore, $J\cap B_e\subseteq \ker\Lambda \cap B_e=\{0\}.$
It now follows from (i) that $J\subseteq \ker \Lambda$.
Hence, $I=\Lambda(J)=\{0\}$.

\end{proof}

\begin{df}(cf. \cite{fa1}) Let $\mb$ be a Fell bundle over a discrete group $G$. A subset $\mj\subseteq\mb$ is  an {\em ideal} 
of \mb if it is a Fell bundle over $G$ with the inherited
structure, and if $\mj\mb=\mj=\mb\mj.$
 An ideal $I$ in $B_e$ is said to be {\em \mb-invariant} if  $B_tIB_t^*\subseteq I$, for all  $t\in G$. 
\end{df}

\begin{prop} 
\label{binv} Let $\mb$ be a Fell bundle over a discrete group $G$, and let $I$ be an ideal in $B_e$.  
Then the following statements are equivalent:
\begin{enumerate}
 \item $I$ is a $\mathcal{B}$-invariant ideal. 
 \item $B_tIB_t^*=I\cap B_tB_t^*$, $\forall t\in G$. 
\item $B_tI=IB_t$, $\forall t\in G$.
\item $\mathcal{I}=(IB_t)_{t\in G}$ is an ideal of $\mathcal{B}$.  
\end{enumerate}
\end{prop}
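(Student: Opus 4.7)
The plan is to exploit that $B_t$ is a $D_t$--$D_{t^{-1}}$ imprimitivity bimodule and apply the Rieffel correspondence (\ref{rcor}): for an ideal $J$ of $D_{t^{-1}}$, the corresponding ideal $h_{B_t}(J)$ of $D_t$ equals $B_tJB_t^*$ and satisfies $h_{B_t}(J)B_t = B_tJ$. As a preliminary, valid for any ideal $I$ of $B_e$ and requiring no invariance hypothesis, the factorizations $B_t = D_tB_t = B_tD_{t^{-1}}$ together with approximate units in $D_t$ and $D_{t^{-1}}$ yield
\[
IB_t = (I\cap D_t)B_t,\qquad B_tI = B_t(I\cap D_{t^{-1}}),\qquad B_tIB_t^* = B_t(I\cap D_{t^{-1}})B_t^*.
\]
In particular $B_tIB_t^*$ is always the Rieffel image $h_{B_t}(I\cap D_{t^{-1}})$, hence an ideal of $D_t$, irrespective of the status of $I$.

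For the equivalence (i) $\Leftrightarrow$ (ii), the implication (ii) $\Rightarrow$ (i) is immediate. For (i) $\Rightarrow$ (ii), the preliminary together with (i) reads $h_{B_t}(I\cap D_{t^{-1}})\subseteq I\cap D_t$, and applying (i) to $t^{-1}$ gives $h_{B_{t^{-1}}}(I\cap D_t)\subseteq I\cap D_{t^{-1}}$; since $h_{B_{t^{-1}}}=h_{B_t}^{-1}$ is a lattice bijection, applying $h_{B_t}$ to the latter yields the reverse containment $I\cap D_t\subseteq B_tIB_t^*$. For (i) $\Leftrightarrow$ (iii), the direction (iii) $\Rightarrow$ (i) is the one-line computation $B_tIB_t^*=IB_tB_t^*=ID_t\subseteq I$, while (i) $\Rightarrow$ (iii) is obtained by substituting the Rieffel identity into the preliminaries:
\[
B_tI = B_t(I\cap D_{t^{-1}}) = (B_tIB_t^*)B_t \subseteq (I\cap D_t)B_t = IB_t,
\]
where the containment uses (ii), already established from (i). The reverse inclusion is obtained by running the same argument with $t$ and $t^{-1}$ swapped and then taking adjoints.

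For (iii) $\Leftrightarrow$ (iv), the direction (iii) $\Rightarrow$ (iv) is a direct axiom check for $\mathcal{I}=(IB_t)_{t\in G}$: the involution gives $(IB_t)^*=B_t^*I=IB_{t^{-1}}$ by (iii), the multiplication satisfies $(IB_s)(IB_t)\subseteq I(B_sI)B_t = I(IB_s)B_t\subseteq IB_{st}$, and the absorption $\mathcal{B}\mathcal{I}=\mathcal{I}=\mathcal{I}\mathcal{B}$ follows from $B_s\cdot IB_t=IB_sB_t\subseteq IB_{st}$ (and its right analogue) via (iii). For (iv) $\Rightarrow$ (i), the inclusion $B_tIB_t^*\subseteq B_t(IB_{t^{-1}})$ (from $B_t^*\subseteq B_{t^{-1}}$) combined with the ideal property $B_t\cdot IB_{t^{-1}}\subseteq(\mathcal{B}\mathcal{I})_e=IB_e=I$ gives (i). The main technical obstacle is the direction (i) $\Rightarrow$ (iii)---deducing the two-sided symmetry $B_tI=IB_t$ from the single-inclusion hypothesis---since that is the step that genuinely invokes the Rieffel correspondence; every other implication reduces to an algebraic identity or a routine Fell-bundle axiom verification.
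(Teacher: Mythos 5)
Your proof is correct, and it reaches the same conclusions by what is essentially the paper's algebra in different clothing. The paper runs the single cycle (i) $\Rightarrow$ (ii) $\Rightarrow$ (iii) $\Rightarrow$ (iv) $\Rightarrow$ (i) entirely by hand, using only $B_tB_t^*B_t=B_t$ and $I\cap D_t=ID_t$: for example (i) $\Rightarrow$ (ii) is the computation $I\cap B_tB_t^*=IB_tB_t^*=B_tB_t^*IB_tB_t^*\subseteq B_tIB_t^*$, and (ii) $\Rightarrow$ (iii) is $B_tI=B_tB_t^*B_tI=B_tIB_t^*B_t=IB_tB_t^*B_t=IB_t$. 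You instead identify $B_tIB_t^*$ with the Rieffel image $h_{B_t}(I\cap D_{t^{-1}})$ and import the needed inclusions from the facts that $h_{B_t}$ is a lattice bijection with inverse $h_{B_{t^{-1}}}$ and that $h_{B_t}(J)B_t=B_tJ$. Unwinding those appeals (e.g.\ $h_{B_t}\circ h_{B_{t^{-1}}}=\mathrm{id}$ becomes $B_tB_t^*IB_tB_t^*=I\cap D_t$) recovers precisely the paper's identities, so your route is not more general, but it is conceptually cleaner, it makes visible at a glance why (ii) is an equality rather than a mere inclusion, and the preliminary reductions $IB_t=(I\cap D_t)B_t$, $B_tI=B_t(I\cap D_{t^{-1}})$ are a useful addition. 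Your check of (iii) $\Leftrightarrow$ (iv) is in fact more detailed than the paper's, which dismisses it as apparent; the only point left tacit on both sides is that the absorption condition $\mathcal{I}\mathcal{B}=\mathcal{I}=\mathcal{B}\mathcal{I}$ demands equality, whose nontrivial half follows at once from $IB_t=IB_tB_t^*B_t\subseteq(\mathcal{I}\mathcal{B})_t$.
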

\begin{proof} 
Suppose that $I$ is $\mathcal{B}$-invariant. Then $B_tIB_t^*\subseteq I$, and,
since $B_tIB_t^*\subseteq B_tB_eB_t^*=B_tB_t^*$, we have that
$B_tIB_t^*\subseteq I\cap B_tB_t^*$. 

On the other hand, since
$B_t^*IB_t\subseteq I$, we have that  
\[I\cap B_tB_t^* =IB_tB_t^*=B_tB_t^*IB_tB_t^*\subseteq B_tIB_t^*.\] 
Thus, (i) implies (ii). Now, if (ii)
holds, then  
\[B_tI=B_tB_t^*B_tI=B_tIB_t^*B_t=(I\cap B_tB_t^*)B_t=(I
B_tB_t^*)B_t=IB_t,\]
which implies (iii). Clearly  $\mathcal{I}$ is  a right ideal, and it is 
apparent that it is also a left ideal if (iii) holds. Finally, suppose that 
$\mathcal{I}$ is an ideal in $\mathcal{B}$. Then 
\[B_tIB_t^*\subseteq
\mathcal{I}\cap B_e=I.\] 
\end{proof}

\begin{rk}
\label{intbinv}
If $J\lhd C^*(\mb)$ or $J\lhd C_r^*(\mb)$, then $J\cap B_e$ is a \mb-invariant ideal.
\end{rk}
\begin{proof}
In both cases 
$J_eB_t=J_t=B_tJ_e$, where  $J_t=J\cap B_t$,  for all $ t\in G$. It is 
clear that $J_t\supseteq J_eB_t$ and $J_t\supseteq B_tJ_e$. On the other hand,  
since $J_t$ is a Hilbert $C^*$ sub-bimodule of $B_t$, we have that $J_t=J_tJ_t^*J_t\subseteq J_eB_t\cap
B_tJ_e$.  
\end{proof}

\begin{prop}
\label{bundid}
Let $\mb$ be a Fell bundle over a discrete group $G$. The map $I\mapsto\mathcal{I}=(I_t)_{t\in G}$, where $I_t=IB_t$,  
is an isomorphism from the lattice
of $\mathcal{B}$-invariant ideals of $B_e$ onto that of ideals
of $\mathcal{B}$. Its inverse is given by $\mathcal{I}\mapsto
\mathcal{I}\cap B_e$.  
\end{prop}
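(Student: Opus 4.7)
The plan is to verify that the two indicated maps are well-defined in opposite directions and mutually inverse, and then to observe that being order-preserving in both directions automatically upgrades the bijection to a lattice isomorphism. The only substantive step is one reverse inclusion, which I would handle by the same Hilbert $C^*$ sub-bimodule factorization already used in Remark~\ref{intbinv}.

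First, well-definedness. That $I \mapsto (IB_t)_{t\in G}$ sends a \mb-invariant ideal of $B_e$ to an ideal of \mb is precisely the content of Proposition~\ref{binv}(iv). Conversely, for any ideal $\mj=(J_t)_{t\in G}$ of \mb, the intersection $J_e := \mj \cap B_e$ is a closed two-sided ideal of $B_e$, and it is \mb-invariant: for each $t\in G$, $B_t J_e B_t^* \subseteq B_t \mj B_t^* \subseteq \mj$, and since this product lies in $B_e$ it is contained in $J_e$.

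Next, the two compositions. Starting from a \mb-invariant ideal $I$ of $B_e$, the $e$-fiber of $(IB_t)_{t\in G}$ is $IB_e = I$. Starting from an ideal $\mj$ of \mb, I must show $J_t = J_e B_t$ for every $t\in G$. The inclusion $J_e B_t \subseteq J_t$ is immediate from the left-ideal property of $\mj$. For the reverse inclusion, I would repeat the argument of Remark~\ref{intbinv}: the fiber $J_t$ is a closed $B_e$-sub-bimodule of the Hilbert $C^*$-bimodule $B_t$, so it satisfies $J_t = \cspan\{J_t J_t^* J_t\}$; since $J_t J_t^* \subseteq \mj \cap B_e = J_e$, this gives $J_t \subseteq J_e B_t$, as required.

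Finally, the lattice structure: both assignments are visibly inclusion-preserving ($I \subseteq I'$ yields $IB_t \subseteq I'B_t$, and $\mj \subseteq \mj'$ yields $J_e \subseteq J'_e$), and an inclusion-preserving bijection between posets whose inverse is also inclusion-preserving is automatically a lattice isomorphism. The only place where anything nontrivial happens is the reverse inclusion $J_t \subseteq J_e B_t$, which hinges on the standard factorization $J_t = \cspan J_t J_t^* J_t$ available for closed sub-bimodules in the Hilbert $C^*$-bimodule setting; I foresee no other obstacles.
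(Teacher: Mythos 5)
Your proposal is correct and follows essentially the same route as the paper: Proposition~\ref{binv} for one direction, and for the other the identity $J_t=\cspan J_tJ_t^*J_t\subseteq J_eB_t$ (the paper phrases this as $I_t=I_eI_t\subseteq I_eB_t\subseteq\mathcal I\cap B_t$, which is the same sub-bimodule factorization it also uses in Remark~\ref{intbinv}), followed by the observation that an order-preserving bijection with order-preserving inverse is a lattice isomorphism. The only nitpick is that $J_eB_t\subseteq\mathcal J\mathcal B=\mathcal J$ is the \emph{right}-ideal property in the paper's convention, not the left one; the mathematics is unaffected.
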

\begin{proof}
Assume that $I$ is \mb-invariant. Then, by proposition \ref{binv}, $\mathcal{I}=(I_t)$ is an ideal in \mb, and the correspondence $I\mapsto\mathcal{I}$ is injective because $I_e=I$. Conversely, if $\mathcal{I}$ is an ideal of
$\mathcal{B}$,  let $I_t:=\mathcal{I}\cap B_t$, for all $t\in
G$. Since $\mathcal{I}$ is a Fell bundle and a right ideal of
$\mathcal{B}$, we have:  
\[ I_t=I_eI_t\subseteq I_eB_t\subseteq
\mathcal{I}\cap B_t=I_t.\] 
Then $I_t=I_eB_t$, and, analogously, $I_t=B_tI_e$. Thus, $I_e$ is a
$\mathcal{B}$-invariant ideal of $B_e$, and $\mathcal{I}=(I_eB_t)$. 
\par Finally, it is clear that both maps preserve inclusion, which implies they are lattice isomorphisms.  
\end{proof}
\begin{df}
 Recall that if $\alpha$ is a partial action of $G$ on a set $X$, then a set $S\subset X$ is said to be {\em $\alpha$-invariant}  if
 \[\alpha_t(S\cap \dom \alpha_t)=S\cap \dom \alpha_{t^{-1}}\text{, for all } t\in G.\]
 \end{df}

 \begin{prop}
\label{alphainv}
 Let $\mb$ be a Fell bundle over a discrete group $G$, and let $\hat{\alpha}$ be the partial action on $\hat{B_e}$ associated with 
 $\mb$. Then the map $J\mapsto X_J$ is an isomorphism from the lattice of \mb-invariant ideals in $B_e$ to that of open
 $\hat{\alpha}$-invariant sets in $\hat{B_e}$.
\end{prop}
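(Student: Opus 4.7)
The plan is to leverage the already-established lattice bijection $J \mapsto X_J$ between ideals of $B_e$ and open subsets of $\hat{B_e}$ (mentioned just after equation (\ref{xj})), so that the whole content of the proposition reduces to the characterization: $J$ is $\mathcal{B}$-invariant if and only if $\hat{\alpha}_t(X_J \cap X_{t^{-1}}) = X_J \cap X_t$ for every $t \in G$.

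The first step is to rewrite both sides of this $\hat{\alpha}$-invariance condition as open subsets of $\hat{D}_t$ via the homeomorphism $r_t$. Under $r_{t^{-1}}$, the open set $X_J \cap X_{t^{-1}}$ corresponds to $\{\sigma \in \hat{D}_{t^{-1}} : \sigma|_{J \cap D_{t^{-1}}} \neq 0\}$; this uses the observation that for $\pi \in X_{t^{-1}}$ one has $\pi|_J \neq 0$ iff $\pi|_{J \cap D_{t^{-1}}} \neq 0$, which follows from the primeness of primitive ideals applied to the primitive quotient $B_e/\ker \pi$ (if both $J$ and $D_{t^{-1}}$ survive modulo $\ker \pi$, so does their product, which in a $C^*$-algebra is their intersection). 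Applying $\ind_{B_t}$ and using the identity (\ref{kerind}) together with the fact that $h_{B_t}$ is a lattice isomorphism, this set maps to $\{\tau \in \hat{D}_t : \tau|_{h_{B_t}(J \cap D_{t^{-1}})} \neq 0\}$. An analogous computation shows that $X_J \cap X_t$ corresponds via $r_t$ to $\{\tau \in \hat{D}_t : \tau|_{J \cap D_t} \neq 0\}$. Invoking once more the ideal-to-open-set bijection, now for $\hat{D}_t$, the $\hat{\alpha}$-invariance of $X_J$ is equivalent to the system of ideal equalities $h_{B_t}(J \cap D_{t^{-1}}) = J \cap D_t$ for all $t \in G$.

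The second step is to translate this ideal condition into condition (iii) of Proposition \ref{binv}. From the defining equation (\ref{rcor}) of the Rieffel correspondence one has $h_{B_t}(J \cap D_{t^{-1}}) B_t = B_t (J \cap D_{t^{-1}}) = B_t J$, the last equality because $B_t = B_t D_{t^{-1}}$ and products of ideals coincide with their intersection in a $C^*$-algebra. Similarly $(J \cap D_t) B_t = J B_t$. Since $B_t$ is a full left Hilbert $D_t$-module (so $B_t B_t^* = D_t$), any ideal $I$ of $D_t$ is recovered from $IB_t$ via $I B_t B_t^* = I$; hence $h_{B_t}(J \cap D_{t^{-1}}) = J \cap D_t$ is equivalent to $B_t J = J B_t$, which by Proposition \ref{binv}(iii) is exactly the $\mathcal{B}$-invariance of $J$. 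Together with the inclusion-preservation of $J \mapsto X_J$, this gives the claimed lattice isomorphism.

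The main obstacle is really just bookkeeping: one must carefully track which ambient space each ideal or spectrum lives in, and translate correctly through the homeomorphisms $r_t$, $r_{t^{-1}}$, and $\ind_{B_t}$. The only subtle point is the use of primeness of primitive ideals to replace $\pi|_J$ by $\pi|_{J \cap D_t}$ when $\pi \in X_t$, which is what lets us read off $X_J \cap X_t$ as the open set of $\hat{D}_t$ coming from the ideal $J \cap D_t$.
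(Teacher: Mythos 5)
Your proof is correct, and it reaches the conclusion by a route that differs from the paper's in one of the two directions. You compress the whole statement into the single ideal identity $h_{B_t}(J\cap D_{t^{-1}})=J\cap D_t$ for all $t$, obtained by pushing $X_J\cap X_{t^{-1}}$ and $X_J\cap X_t$ through the homeomorphisms $r_{t^{-1}}$, $\ind_{B_t}$, $r_t$ and using primeness of primitive ideals to replace $\pi|_J$ by $\pi|_{J\cap D_{t^{-1}}}$; you then convert that identity into $B_tJ=JB_t$ via the defining relation $h_{B_t}(I)B_t=B_tI$ of the Rieffel correspondence and the injectivity of $I\mapsto IB_t$ on ideals of $D_t$ (fullness of $B_t$ as a left $D_t$-module). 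This is essentially the paper's argument for the implication ``$X_J$ invariant $\Rightarrow$ $J$ invariant,'' where $h_{B_t}(D_{t^{-1}}J)$ is computed as an intersection of kernels over $X_J^c\cap X_{t^{-1}}$ and shown to equal $D_tJ$; your version has the advantage of being an honest equivalence, so both implications fall out at once. The paper instead proves the converse direction by a separate, more hands-on representation-theoretic argument: for $\mb$-invariant $J$ it views $B_tJ$ as a $D_tJ$--$JD_{t^{-1}}$ imprimitivity bimodule and exhibits an explicit unitary $B_tJ\otimes_{D_{t^{-1}}J}\mh_\sigma\to B_t\otimes_{D_{t^{-1}}}\mh_\sigma$ intertwining the two induced representations, concluding $\hat\alpha_t(\sigma)|_J\neq 0$ directly. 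What your approach buys is uniformity and brevity; what the paper's buys is an explicit description of how $\hat\alpha_t(\sigma)$ restricts to $J$, without invoking primeness. Both rest on the same pillars: the bijection $I\mapsto X_I$, identity (\ref{kerind}), the lattice-isomorphism property of $h_{B_t}$, and Proposition \ref{binv}(iii).
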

 \begin{proof}
 Since it is well known that the correspondence  $J\mapsto X_J$ is a lattice isomorphism from $\cI(B_e)$ to the topology of $\hat{B_e}$, 
 the proof comes down to showing that an ideal $J$ in $B_e$ is \mb-invariant if and only if the open set $X_J$ is $\hat{\alpha}$-invariant.

 First assume that $J$ is \mb-invariant. If $\sigma \in X_J\cap X_{t^{-1}}$, then $\sigma|_{JD_{t^{-1}}}\neq 0$. Besides, $B_tJ=JB_t$ is a $D_tJ-JD_{t^{-1}}$ 
 imprimitivity bimodule, and it follows that  $\ind_{B_tJ}(\sigma|_{JD_{t^{-1}}}) \neq 0$. 
 
 On the other hand, if $\sigma$ is a representation on a Hilbert space $\mh_\sigma$, then the map $b_tj\otimes_{{\sss D_{t^{-1}}J}}h\mapsto b_tj\otimes_{D_{t^{-1}}}h$ extends to a unitary operator from $B_tJ\otimes_{{\sss D_{t^{-1}}J}}\mh_\sigma$ onto    $B_t\otimes_{D_{t^{-1}}}\mh_\sigma$ that intertwines $\ind_{B_t}(\sigma|_{D_{t^{-1}}})|_{D_tJ}$ and $\ind_{B_tJ}(\sigma|_{D_{t^{-1}}J})$. 
 This shows that 
 $\hat{\alpha}_t(\sigma)|_{J}\neq 0$, that is, that $\hat{\alpha}_t(\sigma)\in X_J$.

Assume now that  $X_J$ is $\hat{\alpha}$-invariant. Then
\[B_tJ=B_t D_{t^{-1}}J=h_{B_t}(D_{t^{-1}}J) B_t,\]
for all $t\in G$.

Now, since the Rieffel correspondence is a lattice isomorphism, 
\begin{align*}
 h_{B_t}(D_{t^{-1}}J)&=h_{B_t}\big(\bigcap\{ \ker\pi|_{D_{t^{-1}}}:\pi\in X^c_J\cap X_{t^{-1}}\}\big)\\
 &= \bigcap\{ h_{B_t}(\ker\pi|_{D_{t^{-1}}}):\pi\in X^c_J\cap X_{t^{-1}}\}\\
  &= \bigcap\{\ker\ind_{B_t}(\pi|_{D_{t^{-1}}}):\pi\in X^c_J\cap X_{t^{-1}}\}\\
 &=D_t\cap\bigcap\{ \ker \hat{\alpha}_t(\pi):\pi\in X^c_J\cap X_{t^{-1}}\}\\
&= D_t\cap \bigcap\{ \ker\pi:\pi\in X^c_J\cap X_{t}\}\\
&=D_tJ.
\end{align*}
Thus, $B_tJ=JD_tB_t=JB_t.$
 \end{proof}

\begin{df}
 Recall that a partial action $\alpha$ on a topological space $X$ is said 
 to be {\em minimal} if $X$ does not have $\alpha$-invariant open proper subsets.
\end{df}

\begin{cor}\label{cor:2.6}
Let $\mb=(B_t)_{t\in G}$ be a Fell bundle with associated partial
action~$\hat{\alpha}$. Consider the following statements: 
\begin{enumerate}

 \item $C^*_r(\mb)$ is simple.  
 \item The Fell bundle \mb  has no non-trivial ideals. 
 \item  $B_e$ has no non-trivial \mb-invariant ideals. 
 \item The partial action $\hat{\alpha}$ is minimal. 

\end{enumerate}
Then we have (i) $\Rightarrow$ (ii) $\iff$
(iii) $\iff$ (iv) and, if $\hat{\alpha}$ is topologically free, then
we also have (iv) $\Rightarrow$ (i), so in this case all the statements are
equivalent.    
\end{cor}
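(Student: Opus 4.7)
The plan is to split the corollary into four separate pieces: the equivalences $\mathrm{(ii)}\!\iff\!\mathrm{(iii)}$ and $\mathrm{(iii)}\!\iff\!\mathrm{(iv)}$, the implication $\mathrm{(i)}\Rightarrow\mathrm{(ii)}$, and the conditional implication $\mathrm{(iv)}\Rightarrow\mathrm{(i)}$ under topological freeness. The first two are essentially formal consequences of the lattice isomorphisms proved in Propositions~\ref{bundid} and~\ref{alphainv}, while the last two require substantive arguments, both of which I would handle by contrapositive.

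For $\mathrm{(ii)}\iff\mathrm{(iii)}$, I would simply invoke Proposition~\ref{bundid}: under its lattice isomorphism between $\mb$-invariant ideals of $B_e$ and ideals of $\mb$, the trivial ideals match on both sides. For $\mathrm{(iii)}\iff\mathrm{(iv)}$, the same reasoning applies via Proposition~\ref{alphainv}: the $\hat{\alpha}$-invariant open subsets $\emptyset$ and $\hat{B_e}$ correspond to the trivial $\mb$-invariant ideals $\{0\}$ and $B_e$, so minimality of $\hat{\alpha}$ is precisely the absence of non-trivial $\mb$-invariant ideals.

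For $\mathrm{(i)}\Rightarrow\mathrm{(ii)}$, suppose $\mb$ admits a non-trivial ideal; by Proposition~\ref{bundid} this produces a non-trivial $\mb$-invariant ideal $J\lhd B_e$. Let $I_J$ denote the closed two-sided ideal of $C^*_r(\mb)$ generated by $J$. Since $J\subseteq I_J$, we have $I_J\neq\{0\}$, so the task is to show $I_J$ is proper, and for this I would use the faithful conditional expectation $E_r\colon C^*_r(\mb)\to B_e$ obtained by factoring $E$ through $\Lambda$ via equation~\eqref{kerlam}. Checking on dense generators of the form $\Lambda_{a_s}\,j\,\Lambda_{b_t}$ with $a_s\in B_s$, $j\in J$, $b_t\in B_t$, $E_r$ vanishes unless $st=e$, and in that case the surviving term $a_s j b_{s^{-1}}$ lies in $B_sJB_{s^{-1}}=JB_sB_{s^{-1}}\subseteq J$ by the characterization $B_sJ=JB_s$ of Proposition~\ref{binv}. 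Since $E_r$ restricts to the identity on $B_e$, this yields $I_J\cap B_e\subseteq J\subsetneq B_e$, hence $I_J\neq C^*_r(\mb)$.

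For $\mathrm{(iv)}\Rightarrow\mathrm{(i)}$ under topological freeness, assume $I\lhd C^*_r(\mb)$ is non-trivial. Corollary~\ref{ibe}(ii) forces $I\cap B_e\neq\{0\}$, Remark~\ref{intbinv} gives that $I\cap B_e$ is $\mb$-invariant, and $I\cap B_e\neq B_e$ because otherwise $I$ would contain $B_e$, and hence, using that $B_e$ contains an approximate identity for $C^*_r(\mb)$, all of $C^*_r(\mb)$. Thus $B_e$ admits a non-trivial $\mb$-invariant ideal, contradicting~$\mathrm{(iii)}$. The main technical hurdle I anticipate is the conditional expectation computation in the $\mathrm{(i)}\Rightarrow\mathrm{(ii)}$ step, where the $\mb$-invariance of $J$ must be exploited through the identity $B_sJ=JB_s$ at exactly the right moment to prevent $I_J$ from absorbing all of $B_e$.
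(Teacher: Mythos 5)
Your proposal is correct, and for three of the four pieces --- the equivalences (ii) $\iff$ (iii) $\iff$ (iv) via Propositions \ref{bundid} and \ref{alphainv}, and the implication (iv) $\Rightarrow$ (i) via Remark \ref{intbinv} and Corollary \ref{ibe} --- it matches the paper's proof; you are in fact slightly more careful than the paper in ruling out $J\cap B_e=B_e$ by appealing to the approximate unit of $B_e$. The one genuinely different step is (i) $\Rightarrow$ (ii). The paper takes a proper ideal $\mj\properideal\mb$, invokes \cite[3.2]{fa1} to get that $C^*_r(\mj)$ is an ideal of $C^*_r(\mb)$, notes that $E(C^*_r(\mj))=\mj\cap B_e\neq B_e$ so this ideal is proper, and concludes from simplicity that $C^*_r(\mj)=\{0\}$, whence $\mj=\{0\}$ because $\mj\subseteq C^*_r(\mj)$. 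You instead form the ideal $I_J$ of $C^*_r(\mb)$ generated by $J=\mj\cap B_e$ and prove properness by computing the faithful expectation $E_r$ on generators, using $B_sJB_s^*=JB_sB_s^*\subseteq J$ from Proposition \ref{binv}. This trades the citation of \cite[3.2]{fa1} for a hands-on density computation (to see that the elements $\Lambda_{a_s}\,j\,\Lambda_{b_t}$ really do span a dense subspace of $I_J$, use $J=J^3$ so that $j$ itself has this form with $s=t=e$); the underlying calculation is the same one that gives the paper its identity $E(C^*_r(\mj))=\mj\cap B_e$, so the two routes are close cousins, but yours is self-contained where the paper leans on the external lemma.
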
 
\begin{proof}

Since all open proper subsets of $\hat{B_e}$  can be written as $X_J$ for some non-trivial ideal $J$ in $B_e$, 
Proposition \ref{bundid} and Proposition \ref{alphainv} show that ii),
iii) and iv) are equivalent. 

Assume now that $C^*_r(\mb)$ is
simple, and let $\mj\properideal\mb$. Then $C^*_r(\mj)\lhd
C^*_r(\mb)$ by \cite[3.2]{fa1}. Besides, since $\mj\neq\mb$,  we have that \[E(C^*_r(\mj))=\mj\cap B_e\neq B_e,\]
by Proposition \ref{bundid}. This implies that
$C^*_r(\mj)\neq C^*_r(\mb)$. Therefore, $C^*_r(\mj)=\{0\}$. We now have that $0\subseteq\mj\subseteq
C^*_r(\mj)=\{0\}$, hence $\mj=\{0\}$ and therefore i) implies~ii).

Suppose now that iv) holds and that $\hat{\alpha}$ is topologically
free. Let $J\properideal C^*_r(\mb)$, and set $J_e=J\cap B_e$.

By Remark \ref{intbinv}, $J_e$ is \mb-invariant. Now, by Proposition \ref{alphainv}, $X_{J_e}=\emptyset$, which implies that $J_e=\{0\}$.
It now follows from Corollary \ref{ibe}
that $J=\{0\}$, which implies that $C^*_r(\mb)$ is simple.
\end{proof}

\par Let $\ma=(A_t)_{t\in G}$ and $\mb=(B_t)_{t\in G}$
  be Fell bundles 
over a discrete group $G$. A map $\phi:\ma\to\mb$ is said to be a morphism if $\phi|_{A_t}:A_t\to B_t$ is linear for all $t\in G$, and
$\phi(aa')=\phi(a)\phi(a')$,  $\phi(a^*)=\phi(a)^*,$  for all $a,a'\in \ma$,  which implies that $\phi$ is norm decreasing. A morphism $\phi$ induces a homomorphism $\phi_c:C_c(\ma)\to C_c(\mb)$, given by $\phi_c(f)(t):=\phi(f(t))$.  The map $\phi_c$ is a $\norm{\ }_1$-continuous $*$-homomorphism, so it extends  to a
homomorphism of Banach $*$-algebras $\phi_1:L^1(\ma)\to L^1(\mb)$, and
hence to a $C^*$-algebra homomorphism $\phi_*:C^*(\ma)\to
C^*(\mb)$. Thus, we have a functor $(\ma \stackrel{\phi}{\to} \mb)\mapsto
(C^*(\ma)\stackrel{\phi_*}{\to}C^*(\mb))$, that turns out to be exact (\cite[3.1]{ae2}). 

If we now consider reduced $C^*$-algebras instead of
full $C^*$-algebras, we get another functor. In fact, suppose that 
$E_\ma:C^*(\ma)\to A_e$ is the canonical conditional expectation and that
$\Lambda_\ma:C^*(\ma)\to C^*_r(\ma)$ is the canonical
homomorphism. Since $\ker\Lambda_A=\{x\in C^*(\ma):
E_\ma(x^*x)=0\}$, and the diagram  
\[\xymatrix{C^*(\ma)\ar[r]^{\phi_*}\ar[d]_{E_\ma}&C^*(\mb)\ar[d]^{E_\mb}\\ A_e\ar[r]_{\phi|_{B_e}}&B_e}\]       
is  commutative, we have that $\phi_*(\ker\Lambda_A) \subseteq \ker\Lambda_\mb$. It follows
that there exists a unique homomorphism $\phi_r:C^*_r(\ma)\to
C^*_r(\mb)$ such
that \[\xymatrix{C^*(\ma)\ar[r]^{\phi_*}\ar[d]_{\Lambda_\ma}&C^*(\mb)\ar[d]^{\Lambda_\mb}\\ C^*_r(\ma)\ar[r]_{\phi_r}&C^*_r(\mb)}\] 
commutes. Thus, we have another functor $(\ma
\stackrel{\phi}{\to} \mb)\mapsto
(C^*_r(\ma)\stackrel{\phi_r}{\to}C^*_r(\mb))$. If $\phi$ is injective
or surjective, then so is $\phi_r$ (\cite[3.2]{fa1}). However, if we
consider the exact sequence of Fell bundles 
\[\xymatrix{0\ar[r]&\mi\ar[r]^i&\mb\ar[r]^p&\mb/\mi\ar[r]&0},\] 
where $\mi$ is an ideal in $\mb$, then the induced sequence
\[\xymatrix{0\ar[r]&C^*_r(\mathcal{I})\ar[r]^{i_r}&C^*_r(\mathcal{B})\ar[r]^{p_r}&C^*_r(\mathcal{B}/\mathcal{I})\ar[r]&0  
}\]
is not exact in general, because $C^*_r(\mi)$ does not necessarily agree with $\ker p_r$.

\par We remark that, since $\ker\Lambda_\ma=\{x\in
C^*(\ma): E_\ma(x^*x)=0\}$, we can define a map $C^*_r(\ma)\to A_e$
such that $\Lambda_A(x)\mapsto E_A(x)$, for all $ \Lambda_\ma(x)\in
C^*_r(\ma)$. This map is itself a faithful conditional
expectation (\cite[2.12]{examen}) with range $A_e$, which we will also
denote by $E_\ma$.  
\par Let $\cI(\mb)$ and $\cI(C^*_r(\mb))$ denote the lattice of ideals of the
  Fell bundle $\mb$ and in $C^*_r(\mb)$, respectively. Since for every $\mi\in \cI(\mb)$ we may
identify $C^*_r(\mi)$ with the closure of $C_c(\mi)$ in $C^*_r(\mb)$,
there is an order-preserving map $\mu:\cI(\mb)\to \cI(C^*_r(\mb))$
given by $\mu(\mi):=C^*_r(\mi)$.

We now consider the maps $\nu_1,\nu_2:\cI(C^*_r(\mb))\to \cI(\mb)$, given as follows. 
$\nu_1(J)$ is the ideal of $\mb$ corresponding to $J\cap B_e$ by
Proposition \ref{binv} (and Remark \ref{intbinv}). That is,
$\nu_1(J)=(J_t)_{t\in G}$, where $J_t=J\cap B_t$. Also, define
$\nu_2(J)$ to be the ideal of $\mb$ generated by $E_\mb(J)$. Then both $\nu_1$ and $\nu_2$ are left inverses for
$\mu$, which implies that $\mu$ is injective. However,  $\mu$ is not surjective in general. Clearly, a
necessary condition for $\mu$ to be onto is that $\nu_1=\nu_2$, that
is, that $J\cap B_e=E_\mb(J)$ for all $J\in \cI(C^*_r(\mb))$. 

\begin{df}\label{diaginv}(cf. \cite{xu})
  Let $\mathcal{B}=(B_t)_{t\in G}$ be a Fell bundle over a discrete
group $G$. An ideal $J$ of $C^*_r(\mb)$ is said to be \textit{diagonal
  invariant} if $E_\mb(J)\subseteq J$, that is, $E_\mb(J)=J\cap B_e$.    
\end{df}

\par In \cite{gs}, Giordano and Sierakowski thoroughly discussed the
correspondence $\mu$ above. In what follows, we generalize their methods and
results to the context of Fell bundles.  

\par Given an ideal $J$ of $C^*_r(\mb)$, let $\mj^{(1)}:=\nu_1(J)$ and $J^{(1)}:=\mu\nu_1(J)$, for $\mu$ and $\nu_1$ as above. Then $J^{(1)}\subseteq
J$, for it is the closure of the subset $C_c(\mj^{(1)})$ of
$J$.

 Similarly, we define $\mj^{(2)}:=\nu_2(J)$ and
$J^{(2)}:=\mu\nu_2(J)$. Then 
$\mj^{(2)}$ is the ideal of $\mb$ generated by $E_\mb(J)$, and
$J^{(2)}=C^*_r(\mj^{(2)})$. Note that the unit fiber of $\mj^{(2)}$ is
the invariant ideal of $B_e$ generated by the ideal $E_\mb(J)$ of
$B_e$. Since $E_\mb$ is the identity on $J\cap B_e$, it follows that  
$\mj^{(1)}\subseteq \mj^{(2)}$. Therefore, $J^{(1)}\subseteq
J\cap J^{(2)}$.       

\begin{df}(cf. \cite[Definition~3.1]{gs})
Let $\mathcal{B}=(B_t)_{t\in G}$ be a Fell bundle over the discrete
group $G$, and let $\mi=(I_t)_{t\in G}$ be an ideal of $\mb$. Then
\begin{enumerate}
 \item $\mathcal{B}$ is said to have the \textit{exactness property} at
   $\mi\lhd\mb$ if the sequence  
\[\xymatrix{0\ar[r]&C^*_r(\mathcal{I})\ar[r]^{i_r}&C^*_r(\mathcal{B})\ar[r]^{p_r}&C^*_r(\mathcal{B}/\mathcal{I})\ar[r]&0  
}\]
is exact.   
 \item $\mathcal{B}$ is said to have the \textit{intersection property} at $\mi$
   if the intersection of $B_e/{I_e}$ with
   any nonzero ideal in $C^*_r(\mathcal{B}/\mathcal{I})$ is also
   nonzero.    
\end{enumerate}
If $\mb$ has the exactness property at every ideal $\mi\in\cI(\mb)$,
we say that $\mb$ has the exactness property, and if it has the
intesection property at every ideal $\mi\in\cI(\mb)$, we say that
$\mathcal{B}$ has the \textit{residual intersection property}.   
\end{df}

\par In view of the previous definition, the second statement of
Corollary~\ref{ibe} could be restated in the following way: $\mb$ has the
intersection property whenever its associated partial action is
topologically free. More generally, we have: 
\begin{prop}\label{tfexact}
Let $\mb=(B_t)_{t\in G}$ be a Fell bundle over a discrete group
$G$. Suppose that $\mj=(J_t)_{t\in G}$ is an ideal of $\mb$, and let
$X:= \hat{B}_e\setminus X_{J_e}$. If the partial action of $\mb$ is 
topologically free on $X$, then $\mb$ has the
intersection property at the ideal $\mj$. 
\end{prop}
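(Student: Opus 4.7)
My plan is to apply Corollary~\ref{ibe}(ii) to the quotient Fell bundle $\mb/\mj$. The unit fiber of $\mb/\mj$ is $B_e/J_e$, whose spectrum is canonically homeomorphic, via $q_{J_e}$ from equation~(\ref{rj}), to $X = \hat{B_e} \setminus X_{J_e}$. Once I verify that the partial action associated with $\mb/\mj$ corresponds, under this identification, to the restriction $\hat{\alpha}|_X$, topological freeness of $\hat{\alpha}$ on $X$ will transfer directly to topological freeness of the partial action of $\mb/\mj$ on all of $\widehat{B_e/J_e}$. Corollary~\ref{ibe}(ii) will then yield that every nonzero ideal of $C^*_r(\mb/\mj)$ intersects $B_e/J_e$ nontrivially, which is exactly the intersection property of $\mb$ at $\mj$.

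To verify the identification, first note that since $\mj$ is an ideal of $\mb$, its unit fiber $J_e$ is $\mb$-invariant by Proposition~\ref{binv}, so Proposition~\ref{alphainv} ensures $X_{J_e}$ is open and $\hat{\alpha}$-invariant. Hence $X$ itself is $\hat{\alpha}$-invariant and $\hat{\alpha}|_X$ is a genuine partial action on $X$ with domain pieces $X \cap X_{t^{-1}}$. For the quotient bundle $\mb/\mj$, a direct computation gives $(B_t/J_t)(B_t/J_t)^* = (D_t + J_e)/J_e \cong D_t/(D_t \cap J_e)$, and its spectrum inside $\widehat{B_e/J_e}$ corresponds to $X \cap X_t$ under $q_{J_e}$. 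So the domain sets match up. For the map itself, given a representation $\pi$ of $B_e$ in $X \cap X_{t^{-1}}$ on $\mh_\pi$, the $\mb$-invariance of $J_e$ yields $J_t = B_t(J_e \cap D_{t^{-1}})$; since $\pi$ annihilates $J_e$, any $b_t e \otimes h$ with $b_t \in B_t$, $e \in J_e \cap D_{t^{-1}}$, $h \in \mh_\pi$ vanishes in $B_t \otimes_{D_{t^{-1}}} \mh_\pi$. Consequently, the natural quotient map
\[
B_t \otimes_{D_{t^{-1}}} \mh_\pi \longrightarrow (B_t/J_t) \otimes_{D_{t^{-1}}/(J_e \cap D_{t^{-1}})} \mh_\pi
\]
is a well-defined unitary intertwining the left actions of $B_e$ and $B_e/J_e$. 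By Remark~\ref{rkalpha}, this exhibits the partial action of $\mb/\mj$ at $t$ as $\hat{\alpha}_t$ restricted to $X \cap X_{t^{-1}}$.

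The main technical point is this compatibility between the partial-action construction and quotients of Fell bundles, but it rests on standard imprimitivity-bimodule manipulations together with the $\mb$-invariance of $J_e$. Once the identification is in place, the intersection property at $\mj$ is immediate from Corollary~\ref{ibe}(ii) applied to $\mb/\mj$.
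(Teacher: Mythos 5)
Your proposal is correct and follows essentially the same route as the paper: pass to the quotient bundle $\mb/\mj$, identify its associated partial action with the restriction of $\hat{\alpha}$ to $X=\hat{B_e}\setminus X_{J_e}$ (the paper leaves this as ``readily checked'' and cites the commutativity of diagram (\ref{diag}), whereas you carry out the unitary identification explicitly), and then invoke Corollary~\ref{ibe}(ii). Your explicit verification of the compatibility of the partial action with quotients is sound and simply fills in the detail the paper omits.
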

\begin{proof}
The unit fiber of the quotient bundle $\mb/\mj$ is $B_e/J_e$, whose
spectrum is homeomorphic to $\hat{B_e}\setminus X_{J_e}=X$. On the
other hand, it is readily checked that the partial action associated to
the Fell bundle $\mb/\mj$ agrees with the one induced by the partial
action of $\mb$. Now, by the commutativity of diagram (\ref{diag}) and
the fact that the partial action associated with $\mb$ is
topologically free on $X$, we conclude that the partial action
associated to $\mb/\mj$ is topologically free. Finally, we  apply part
(ii) in~\ref{ibe}.   
\end{proof} 
\begin{cor}\label{cor:tfexact}
If the partial action of the Fell bundle $\mb$ is topologically free
on every invariant closed subset of $\hat{B}_e$, then $\mb$ has the
residual intersection property.
\end{cor}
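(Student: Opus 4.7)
The plan is to reduce the corollary directly to Proposition \ref{tfexact} by observing that the closed invariant subsets of $\hat{B}_e$ are exactly the complements of the sets $X_{J_e}$ appearing in the hypothesis of that proposition.

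First, I would fix an arbitrary ideal $\mj=(J_t)_{t\in G}$ of $\mb$. By Proposition \ref{bundid}, $J_e = \mj \cap B_e$ is a $\mb$-invariant ideal of $B_e$, so by Proposition \ref{alphainv}, the open set $X_{J_e} \subseteq \hat{B}_e$ is $\hat{\alpha}$-invariant. Consequently, its complement $X := \hat{B}_e \setminus X_{J_e}$ is a closed $\hat{\alpha}$-invariant subset of $\hat{B}_e$.

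Next, by the hypothesis of the corollary, the partial action $\hat{\alpha}$ is topologically free on $X$. Applying Proposition \ref{tfexact} to the ideal $\mj$, this yields the intersection property for $\mb$ at $\mj$. Since $\mj$ was arbitrary, $\mb$ has the intersection property at every ideal, which by definition means that $\mb$ has the residual intersection property.

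There is no real obstacle here; the entire content of the corollary is in matching the two descriptions of closed invariant sets (intrinsic vs.\ of the form $\hat{B}_e \setminus X_{J_e}$), and this matching is exactly what Proposition \ref{alphainv} provides. The only thing worth checking carefully is that every closed $\hat{\alpha}$-invariant subset does arise as such a complement, which is immediate because every open subset of $\hat{B}_e$ has the form $X_J$ for some ideal $J$ of $B_e$, and $\hat{\alpha}$-invariance of the open set translates into $\mb$-invariance of $J$ by Proposition \ref{alphainv}.
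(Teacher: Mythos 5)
Your proof is correct and is exactly the argument the paper intends: the corollary is stated without proof precisely because it follows from Proposition \ref{tfexact} in the way you describe, by noting via Propositions \ref{bundid} and \ref{alphainv} that $\hat{B}_e\setminus X_{J_e}$ is a closed $\hat{\alpha}$-invariant set for every ideal $\mj$ of $\mb$. Nothing further is needed.
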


\begin{prop}\label{prop:exactness}
Let $\mathcal{B}=(B_t)_{t\in G}$ be a Fell bundle over a discrete
group $G$, and let $J\in\cI(C^*_r(\mb))$. 
\begin{enumerate}
 \item If $\mb$ has the exactness property at $\mj^{(2)}$, then
   $J\subseteq J^{(2)}$. If, in addition, $J$ is 
   diagonal invariant, then $J^{(1)}=J=J^{(2)}$.  
 \item If $\mb$ has the exactness property and the intersection
   property at $\mj^{(1)}$, then $J^{(1)}=J=J^{(2)}$.     
\end{enumerate}
\end{prop}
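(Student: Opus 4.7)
For part (i), my plan is to exploit the faithful canonical conditional expectation on the reduced quotient algebra. By exactness at $\mj^{(2)}$, the quotient map $p\colon C^*_r(\mb)\to C^*_r(\mb/\mj^{(2)})$ has kernel $J^{(2)}$; let $\tilde E$ denote the canonical faithful conditional expectation of $C^*_r(\mb/\mj^{(2)})$ onto its unit fiber $B_e/I^{(2)}_e$, where $I^{(2)}_e=\mj^{(2)}\cap B_e$. For $x\in J$, the commutative square relating $p$, $E_\mb$ and $\tilde E$ gives
\[
\tilde E\bigl(p(x)^*p(x)\bigr)=p\bigl(E_\mb(x^*x)\bigr),
\]
and since $x^*x\in J$ and $E_\mb(J)\subseteq I^{(2)}_e$ by definition of $\mj^{(2)}$, the right-hand side vanishes. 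Faithfulness of $\tilde E$ then yields $p(x)=0$, i.e.\ $J\subseteq J^{(2)}$. If moreover $J$ is diagonal invariant then $E_\mb(J)=J\cap B_e$, which is already $\mb$-invariant by Remark \ref{intbinv}. Consequently the unit fibers of $\mj^{(1)}$ and $\mj^{(2)}$ coincide, so Proposition \ref{bundid} forces $\mj^{(1)}=\mj^{(2)}$ and hence $J^{(1)}=J^{(2)}$. Together with the inclusions $J^{(1)}\subseteq J\subseteq J^{(2)}$ already at hand, this gives $J^{(1)}=J=J^{(2)}$.

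For (ii), my strategy is to first prove $J=J^{(1)}$ and then deduce diagonal invariance so that (i) can close the argument. Let $q\colon C^*_r(\mb)\to C^*_r(\mb/\mj^{(1)})$ be the quotient map, whose kernel is $J^{(1)}\subseteq J$ by exactness at $\mj^{(1)}$. Suppose for contradiction that $q(J)\neq 0$; the intersection property at $\mj^{(1)}$ then produces a nonzero $c\in q(J)\cap B_e/(J\cap B_e)$. Writing $c=q(b)=q(x)$ with $b\in B_e$ and $x\in J$, we obtain $b-x\in\ker q=J^{(1)}\subseteq J$, so $b\in J\cap B_e$ and hence $c=q(b)=0$, a contradiction. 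Therefore $J\subseteq J^{(1)}$, so $J=J^{(1)}=C^*_r(\mj^{(1)})$, which in turn gives $E_\mb(J)=J\cap B_e$, i.e.\ $J$ is diagonal invariant. Since the exactness property supplies exactness at $\mj^{(2)}$, applying (i) yields $J=J^{(2)}$.

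The subtle point I expect is in (i): one must invoke both the existence of the faithful conditional expectation $\tilde E$ on the reduced quotient algebra (built from the machinery recalled just before Definition \ref{diaginv}) and the compatibility square relating it to $E_\mb$ via the quotient map $p$. Once these are in place, (ii) is essentially a two-line observation combining exactness with the intersection property: any nonzero ``diagonal'' element of $q(J)$ would have to be the image of some $b\in B_e\cap(J+\ker q)=J\cap B_e$, which is absurd. This is precisely the place where both hypotheses of (ii) are simultaneously essential.
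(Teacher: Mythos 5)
Your proof is correct and follows essentially the same route as the paper: part (i) via the commuting square relating $E_\mb$ to the faithful conditional expectation on $C^*_r(\mb/\mj^{(2)})$, and part (ii) via $\ker q_r=J^{(1)}$ together with the intersection property applied to $q_r(J)$, using the same element-chasing argument. The only cosmetic difference is that you close (ii) by deducing diagonal invariance and invoking (i), whereas the paper observes directly that $J=J^{(1)}$ gives $E_\mb(J)\subseteq J^{(1)}$ and hence $\mj^{(2)}=\mj^{(1)}$; both steps are immediate.
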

\begin{proof}
\par 
Let $\xymatrix{0\ar[r]&\mj^{(2)}\ar[r]^i&\mb\ar[r]^p&\mb/\mj^{(2)}\ar[r]&0}$
be the exact sequence associated with the ideal $\mj^{(2)}$ of
$\mb$, and suppose that $\mathcal{B}$ has the exactness property at
$\mj^{(2)}$.   
Then  the diagram   
\[\xymatrix{0\ar[r]&C^*_r(\mj^{(2)})\ar[r]^{i_r}\ar[d]^{E_{\mj^{(2)}}}&C^*_r(\mathcal{B})\ar[r]^{p_r}\ar[d]^{E_{\mb}}&C^*_r(\mathcal{B}/\mj^{(2)})\ar[r]\ar[d]^{E_{\mb/\mj^{(2)}}}&0\\    
0\ar[r]&\mj^{(2)}\cap B_e\ar[r]_i&B_e\ar[r]_p&B_e/(\mj^{(2)}\cap B_e)\ar[r]&0 
}\]
is commutative and has exact rows.
If $x\in J^+$, then $E_{\mb}(x)\in \mj^{(2)}\cap B_e^+$, which implies that
$E_{\mb/\mj^{(2)}}p_r(x)=0$. Since $p_r(x)\in C^*_r(\mb/\mj^{(2)})^+$ and $E_{\mb/\mj^{(2)}}$
is faithful, then $p_r(x)=0$. Then $x\in C^*_r(\mj^{(2)})$, because of the 
exactness of the first row at $C^*_r(\mb)$. This shows that
$J\subseteq J^{(2)}$. Since the inclusion
  $J^{(1)}\subseteq J$  always holds, and the definition of
  diagonal invariance requires precisely that $\mj^{(1)}=\mj^{(2)}$,
  which implies that $J^{(1)}=J^{(2)}$, we conclude that
  $J^{(1)}=J=J^{(2)}$.  
\par Suppose now that $\mathcal{B}$ has both the exactness  
 and the residual intersection properties at $\mj^{(1)}$. Let 
 $q:\mb\to \mb/\mj^{(1)}$ be the quotient map. In order to
   prove that $J^{(1)}=J=J^{(2)}$, it suffices to show that $J^{(1)}=J$, for
   in this case we have that $E(J)\subseteq J^{(1)}$, and, consequently, that
   $J^{(2)}=J^{(1)}$. In other words, we have to show that
 $q_r(J)=\{0\}$. Since 
 $\mb$ is exact at $\mj^{(1)}$, we have $\ker q_r=J^{(1)}$. Let
 $\bar{q}_r:C^*_r(\mb)/J^{(1)}\to C^*_r(\mb/\mj^{(1)})$ be the
 isomorphism induced by $q_r$. Since $\mb$ has the intersection
 property at $\mj^{(1)}$, in order to prove that $q_r(J)=\{0\}$, it suffices
 to show that $q_r(J)\cap B_e/(J\cap B_e)=\{0\}$, or, equivalently, that
 \begin{equation}
\label{zero}
J/J^{(1)}\cap
 (B_e+J^{(1)})/J^{(1)}=\{0\},
\end{equation}
since 
\[J/J^{(1)}\cap
 (B_e+J^{(1)})/J^{(1)}=\bar{q}_r^{-1}(q_r(J)\cap B_e/(J\cap B_e)).\] 
Let $x\in J$, and $b\in B_e$ be such that $x+J^{(1)}=b+J^{(1)}\in J/J^{(1)}\cap
 (B_e+J^{(1)})/J^{(1)}$.  Then $x-b\in
 J^{(1)}\subseteq J$, which implies that $b\in J\cap
 B_e\subseteq J^{(1)}$ and $x\in J^{(1)}$, so (\ref{zero}) holds,  and (ii) follows.
\end{proof}

\begin{lem}\label{latquot}
If the map $\mu:\mi(\mb)\to\mi(C^*_r(\mb))$ given by $\mi\mapsto
C^*_r(\mi)$ is a lattice isomorphism and $\mb$ has the exactness
property at $\mj\in \mi(\mb)$, then
$\mu_{\mj}:\mi(\mb/\mj)\to \mi(C^*_r(\mb/\mj))$ given by $\mi\mapsto
C^*_r(\mi/\mj)$ is also a lattice isomorphism.  
\end{lem}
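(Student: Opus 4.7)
The plan is to display $\mu_\mj$ as the composition of three lattice isomorphisms: a correspondence theorem at the Fell-bundle level, the restriction of $\mu$ to ideals containing $\mj$, and a correspondence theorem at the reduced $C^*$-algebra level (which is where the exactness hypothesis enters).

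First, I would record the Fell-bundle-level correspondence
\[\Phi_1:\bigl\{\mi\in\cI(\mb):\mi\supseteq\mj\bigr\}\longrightarrow \cI(\mb/\mj),\qquad \mi\mapsto \mi/\mj,\]
a lattice isomorphism whose inverse pulls back along the quotient morphism $q:\mb\to\mb/\mj$. This is automatic from the correspondence theorem for $C^*$-algebras applied fibrewise, using that an ideal of a Fell bundle is by definition a sub-bundle closed under left and right multiplication by $\mb$.

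Next, the exactness property at $\mj$ provides the short exact sequence
\[0\longrightarrow C^*_r(\mj)\longrightarrow C^*_r(\mb)\stackrel{q_r}{\longrightarrow}C^*_r(\mb/\mj)\longrightarrow 0,\]
so the standard $C^*$-algebra correspondence theorem yields a second lattice isomorphism
\[\Phi_2:\bigl\{J\in\cI(C^*_r(\mb)):J\supseteq C^*_r(\mj)\bigr\}\longrightarrow \cI(C^*_r(\mb/\mj)),\qquad J\mapsto q_r(J).\]
Since $\mu$ is a lattice isomorphism with $\mu(\mj)=C^*_r(\mj)$, restriction produces a lattice isomorphism $\mu'$ between the domains of $\Phi_1$ and $\Phi_2$.

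I would then verify the identity $\mu_\mj=\Phi_2\circ\mu'\circ\Phi_1^{-1}$, which amounts to checking that $q_r(C^*_r(\tilde\mi))=C^*_r(\tilde\mi/\mj)$ for every $\tilde\mi\in\cI(\mb)$ with $\tilde\mi\supseteq\mj$. This is pure functoriality of the reduced construction: it holds at the level of compactly supported sections because $q_c$ maps $C_c(\tilde\mi)$ onto $C_c(\tilde\mi/\mj)$, and passing to closures inside $C^*_r(\mb/\mj)$ preserves the identity.

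The only non-trivial ingredient is the existence of $\Phi_2$, which is precisely where the exactness hypothesis at $\mj$ is consumed; everything else is routine functoriality, and a composition of three lattice isomorphisms is again a lattice isomorphism.
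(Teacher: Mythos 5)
Your proposal is correct and follows essentially the same route as the paper: the paper also writes $\mu_\mj$ as a composition $\eta_3\eta_2\,\mu|_{\mi_\mj}\,\eta_1^{-1}$, where $\eta_1$ is your $\Phi_1$, the restricted $\mu$ is your $\mu'$, and $\eta_3\eta_2$ (quotient by $C^*_r(\mj)$ followed by the isomorphism $C^*_r(\mb)/C^*_r(\mj)\cong C^*_r(\mb/\mj)$ coming from exactness at $\mj$) is exactly your $\Phi_2$. The only cosmetic difference is that you fold the two $C^*$-level steps into one map and make the functoriality check $q_r(C^*_r(\tilde\mi))=C^*_r(\tilde\mi/\mj)$ explicit, which the paper leaves implicit.
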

\begin{proof}
Let $\mi_\mj:=\{\mi\in\mi(\mb):\mj\subseteq\mi\}$ and
$\mi_{\mu(\mj)}:=\{I\in\mi(C^*_r(\mb)):\mu(\mj)\subseteq I\}$. Then
the restriction of $\mu$ to $\mi_\mj$ gives rise to an isomorphism
between $\mi_\mj$ and $\mi_{\mu(\mj)}$. On the other hand, the map
$\eta_1:\mi\mapsto\mi/\mj$ is an isomorphism from $\mi_\mj$ onto 
$\mi(\mb/\mj)$, as is the map $\eta_2: I\mapsto I/C^*_r(\mj)$
from $\mi_{\mu(\mj)}$ onto $\mi(C^*_r(\mb)/C^*_r(\mj))$. Moreover,
since $\mb$ is exact at $\mj$, the quotient 
map $p:\mb\to\mb/\mj$ induces an isomorphism 
$\bar{p}_r:C^*_r(\mb)/C^*_r(\mj)\to C^*_r(\mb/\mj)$, which in turn
induces an obvious lattice isomorphism 
$\eta_3:\mi(C^*_r(\mb)/C^*_r(\mj))\to\mi(C^*_r(\mb/\mi))$. Then 
$\mu_\mj$ is an isomorphism, because
$\mu_\mj=\eta_3\eta_2\mu|_{\mi_\mj}\eta_1^{-1}$.       
\end{proof}

\begin{thm}\label{thm:gs}
Let $\mathcal{B}=(B_t)_{t\in G}$ be a Fell bundle over a discrete
group $G$. Let $\mu: \cI(\mb)\longrightarrow \cI(C_r^*(\mb))$ be the lattice homomorphism given by
$\mu(\cI)=C^*_r(\cI)$.  Then the following
statements are equivalent:
\begin{enumerate}
 \item The map $\mu$ is an isomorphism of lattices.
 \item $\mathcal{B}$ has the exactness property and every
   $J\in\cI(C^*_r(\mb))$ is diagonal invariant. 
 \item $\mathcal{B}$ has the exactness and residual intersection
properties. 
\end{enumerate}   
\end{thm}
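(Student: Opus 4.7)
The plan is to prove the circle (i) $\Rightarrow$ (ii) $\Rightarrow$ (iii) $\Rightarrow$ (i), exploiting that $\mu$ always admits the two order-preserving left inverses $\nu_1$ and $\nu_2$ described before Definition \ref{diaginv}, and that Proposition \ref{prop:exactness}(ii) already delivers surjectivity of $\mu$ under the hypotheses of (iii).

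For (i) $\Rightarrow$ (ii), diagonal invariance is immediate from bijectivity: since $\nu_1$ and $\nu_2$ are both left inverses of the bijection $\mu$, they coincide, so $J\cap B_e = E_\mb(J)$ for every $J\in\cI(C^*_r(\mb))$. For exactness at an ideal $\mj\lhd\mb$, the inclusion $C^*_r(\mj)\subseteq\ker p_r$ is automatic; for the reverse I would set $\mi:=\nu_1(\ker p_r)\in\cI(\mb)$. Injectivity of $\mu$ forces $\mj\subseteq\mi$, while the explicit formula $p_r(\Lambda^\mb_{b_t})=\Lambda^{\mb/\mj}_{p(b_t)}$ combined with the faithfulness of $\Lambda$ on each fiber of $\mb/\mj$ gives $\ker p_r\cap B_t=\mj_t$. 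Therefore $\mi_t=\mi\cap B_t\subseteq\mj_t$ for every $t$, hence $\mi=\mj$ and $\ker p_r=C^*_r(\mj)$.

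For (ii) $\Rightarrow$ (iii), I would fix $\mj\lhd\mb$ and a nonzero $K\lhd C^*_r(\mb/\mj)$. Exactness supplies the isomorphism $\bar p_r:C^*_r(\mb)/C^*_r(\mj)\to C^*_r(\mb/\mj)$, so $K$ pulls back to $J/C^*_r(\mj)$ for some $J\supsetneq C^*_r(\mj)$. The core step is to show that $J\cap B_e\supsetneq J_e$: pick $x\in J\setminus C^*_r(\mj)$, so diagonal invariance places $E_\mb(x^*x)$ inside $J\cap B_e$; if this element were already in $J_e$, the commutative square linking $E_\mb$ and $E_{\mb/\mj}$ through $p_r$, together with the faithfulness of the reduced expectation $E_{\mb/\mj}$, would force $p_r(x^*x)=0$, hence $x\in\ker p_r=C^*_r(\mj)$, a contradiction. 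Any $b\in(J\cap B_e)\setminus J_e$ then represents, via the embedding $B_e/J_e\hookrightarrow C^*_r(\mb/\mj)$, a nonzero element of $K\cap B_e/J_e$.

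For (iii) $\Rightarrow$ (i), Proposition \ref{prop:exactness}(ii) applies to every $J\in\cI(C^*_r(\mb))$ and yields $J=J^{(1)}=\mu(\mj^{(1)})$, giving surjectivity of $\mu$; combined with the standing injectivity and the obvious order-preservation of both $\mu$ and of its inverse $\nu_1$, this upgrades $\mu$ to a lattice isomorphism. The main obstacle is the central step in (ii) $\Rightarrow$ (iii), where diagonal invariance has to be chained carefully with the conditional-expectation diagram and with the faithfulness of the reduced conditional expectation on the quotient bundle; the fiberwise identification $\ker p_r\cap B_t=\mj_t$ used in (i) $\Rightarrow$ (ii) is the other delicate point, but it is essentially automatic once one unpacks the action of $p_r$ on the image of each fiber inside $C^*_r(\mb)$.
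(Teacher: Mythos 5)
Your proof is correct, but it is organized differently from the paper's. The paper does not prove the implications in a cycle: it gets both (ii)\,$\Rightarrow$\,(i) and (iii)\,$\Rightarrow$\,(i) directly from Proposition~\ref{prop:exactness}, and then proves (i)\,$\Rightarrow$\,(ii) and (i)\,$\Rightarrow$\,(iii) separately, the latter by passing through Lemma~\ref{latquot} to lift an ideal of $C^*_r(\mb/\mj)$ to an ideal of $\mb$ containing $\mj$ and comparing unit fibers via Proposition~\ref{bundid}. Your genuinely new contribution is the direct argument for (ii)\,$\Rightarrow$\,(iii): pulling a nonzero $K\lhd C^*_r(\mb/\mj)$ back to $J\supsetneq C^*_r(\mj)$, using diagonal invariance of $J$ to place $E_\mb(x^*x)$ in $J\cap B_e$, and then using the conditional-expectation square together with faithfulness of $E_{\mb/\mj}$ and $\ker p_r=C^*_r(\mj)$ to see that this element cannot lie in $J_e$ --- this is in effect a relative version of the computation proving $J\subseteq J^{(2)}$ in Proposition~\ref{prop:exactness}(i), and it lets you bypass Lemma~\ref{latquot} entirely. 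Your (i)\,$\Rightarrow$\,(ii) also differs in detail: you compute $\ker p_r\cap B_t=\mj_t$ fiberwise from $p_r(\Lambda_{b_t})=\Lambda_{p(b_t)}$ and the isometry of $\Lambda$ on fibers, where the paper instead invokes diagonal invariance of $\ker(p_r)$; both are sound, though note that your last step, from $\nu_1(\ker p_r)=\mj$ to $\ker p_r=C^*_r(\mj)$, silently uses surjectivity of $\mu$ (an ideal with the same fibers as $C^*_r(\mj)$ could a priori be strictly larger --- that is exactly what failure of exactness looks like), so you should say explicitly that (i) supplies $\ker p_r=\mu(\nu_1(\ker p_r))$. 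The remaining steps (coincidence of the two left inverses $\nu_1,\nu_2$ giving diagonal invariance, and Proposition~\ref{prop:exactness}(ii) giving surjectivity under (iii)) match the paper.
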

\begin{proof}
It follows from Proposition~\ref{prop:exactness} that either statement (ii) or (iii) implies (i). Suppose that
$\mu$ is a lattice isomorphism. Then any ideal of $C^*_r(\mb)$ is of
the form $C^*_r(\mi)$, and therefore is diagonal invariant. Recall
from the comments preceeding Definition \ref{diaginv}   
that the inverse of $\mu$ is given by $J\mapsto J\cap B_e$. To show 
that (i) implies (ii) we have to prove that $\mb$ has the exactness
property at any ideal $\mi=(I_t)_{t\in G}$ of $\mb$. The quotient map 
$p:\mb\to\mb/\mi$ induces a surjective homomorphism $p_r:C^*_r(\mb)\to
C^*_r(\mb/\mi)$, whose kernel contains $C^*_r(\mi)$. Then
$I_e=E_\mb(C^*_r(\mi))\subseteq E_\mb(\ker(p_r))=\ker(p_r)\cap B_e$, the last equation following from the diagonal invariance of
$\ker(p_r)$. But $\ker(p_r)\cap B_e=\ker(p|_{B_e})=I_e=C^*_r(\mi)\cap
B_e$. Then $\ker(p_r)=C^*_r(\mi)$. 
\par To conclude that (i) also implies (iii) we have to show that
$\mb$ has the residual intersection property. So pick an element
$\mj=(J_t)_{t\in G}\in\mi(\mb)$, and suppose that $I\lhd
C^*_r(\mb/\mi)$ is such that $I\cap \frac{B_e}{J_e}=\{0\}$. By
Lemma~\ref{latquot} there is a unique $\mi=(I_t)_{t\in G}\lhd\mb$ such
that $\mj\subseteq\mi$ and $I=C^*_r(\mi/\mj)$. Then
\[\{0\}=I\cap\frac{B_e}{J_e}=\frac{I_e\cap B_e}{J_e}.\] 
That is, $J_e=I_e$. Since, by \ref{bundid}, this implies that $\mi=\mj$, it follows that $I=\{0\}$.  
\end{proof}

\begin{cor}
\label{320}
Let $\mb=(B_t)_{t\in G}$ be a Fell bundle over a discrete group $G$. 
Then the correspondences $\mj\mapsto  C^*(\mj)$ and $\mj\mapsto
C_r^*(\mj)$ are injective lattice homomorphisms from the lattice of ideals 
in $\mb$ to the lattices $\cI(C^*(\mb))$ and $\cI(C_r^*(\mb))$ of
ideals in $C^*(\mb)$ and $C_r^*(\mb)$ respectively. If $\mb$ has the
exactness property and its associated partial action is topologically  
free on every $\hat{\alpha}$-invariant closed subset of $\hat{B_e}$,
then $\mi(\mb)\to\mi(C^*_r(\mb))$ is a lattice isomorphism. 
\end{cor}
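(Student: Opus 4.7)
The plan is to handle the two assertions of the corollary separately, leveraging the machinery developed in Section~\ref{sec:rideals}.

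For the first assertion (injective lattice homomorphisms), I would exploit that, by Proposition~\ref{bundid}, ideals of $\mb$ are completely determined by their unit fibers, combined with the identities $C^*(\mj)\cap B_e = J_e = C^*_r(\mj)\cap B_e$ for every ideal $\mj = (J_t)_{t\in G}$ of $\mb$. For the full $C^*$-algebra this follows from exactness of the functor $\mb\mapsto C^*(\mb)$ applied to the short exact sequence $0\to\mj\to\mb\to\mb/\mj\to 0$; for the reduced case it follows from the faithful conditional expectation $E_\mj:C^*_r(\mj)\to J_e$ being the restriction of $E_\mb$ to $C^*_r(\mj)$. Together with Proposition~\ref{bundid}, this yields injectivity. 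Order preservation is obvious, and the join is preserved because $C_c(\mj_1\vee\mj_2) = C_c(\mj_1) + C_c(\mj_2)$ at the level of compactly supported sections, whose closure yields the join in $\cI(C^*(\mb))$ and $\cI(C^*_r(\mb))$.

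The meet preservation is the most delicate piece of the first assertion. For the full case the plan is to combine exactness of the $C^*$-functor with the canonical Fell-bundle embedding $\mb/(\mj_1\cap\mj_2)\hookrightarrow \mb/\mj_1\oplus \mb/\mj_2$: both $C^*(\mj_1)\cap C^*(\mj_2)$ and $C^*(\mj_1\cap\mj_2)$ coincide with the kernel of the induced map $C^*(\mb)\to C^*(\mb/\mj_1)\oplus C^*(\mb/\mj_2)$. For the reduced case the same diagram is available (the reduced functor preserves injections and surjections, as recalled in Section~\ref{sec:rideals}), but exactness fails in general, so identification of the kernel with $C^*_r(\mj_1\cap\mj_2)$ must be replaced by a Fourier-coefficient argument: any $x\in C^*_r(\mj_1)\cap C^*_r(\mj_2)$ has all Fourier coefficients $x(t)\in J_{1,t}\cap J_{2,t}=(J_1\cap J_2)_t$ (by applying the retraction $\nu_1$ discussed before Definition~\ref{diaginv}), and hence lies in $C^*_r(\mj_1\cap\mj_2)$ by standard Fell-bundle Fourier analysis.

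For the second assertion, the argument is immediate from the tools collected in Section~\ref{sec:rideals}: topological freeness of $\hat\alpha$ on every $\hat\alpha$-invariant closed subset of $\hat{B_e}$ yields the residual intersection property by Corollary~\ref{cor:tfexact}, and combining this with the exactness assumption, the implication (iii)$\Rightarrow$(i) in Theorem~\ref{thm:gs} gives that $\mu:\cI(\mb)\to\cI(C^*_r(\mb))$ is a lattice isomorphism. The main obstacle I anticipate is precisely the meet-preservation in the reduced case, where the failure of exactness of the reduced $C^*$-functor forces us to argue through fibrewise Fourier coefficients rather than via a direct exact-sequence diagram chase.
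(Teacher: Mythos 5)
Your treatment of the two claims the paper actually argues --- injectivity of both correspondences, and the lattice-isomorphism statement under exactness plus residual topological freeness --- coincides with the paper's route: injectivity comes from $C^*(\mj)\cap B_e=J_e=C^*_r(\mj)\cap B_e$ (via exactness of the full functor in one case and the faithful conditional expectation in the other) together with Proposition~\ref{bundid}, and the final assertion is exactly Corollary~\ref{cor:tfexact} followed by the implication (iii)$\Rightarrow$(i) of Theorem~\ref{thm:gs}. The paper treats the lattice-homomorphism property itself as immediate and only verifies injectivity (i.e.\ order preservation plus the retraction onto unit fibers); you attempt to verify meets and joins explicitly, and it is there that a genuine gap appears.

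Concretely, your argument that $C^*_r(\mj_1)\cap C^*_r(\mj_2)\subseteq C^*_r(\mj_1\cap\mj_2)$ does not go through. From $\hat{x}(t)\in (\mj_1\cap\mj_2)_t$ for all $t$ one concludes, by faithfulness of the conditional expectation on $C^*_r\bigl(\mb/(\mj_1\cap\mj_2)\bigr)$, only that $x\in\ker p_r$, where $p:\mb\to\mb/(\mj_1\cap\mj_2)$ is the quotient morphism; the further identification $\ker p_r=C^*_r(\mj_1\cap\mj_2)$ is precisely the exactness property at $\mj_1\cap\mj_2$, which the first sentence of the corollary does not assume and which fails in general, as the paper notes before Definition~\ref{diaginv}. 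In other words, the set of elements of $C^*_r(\mb)$ whose Fourier coefficients all lie in a given ideal of $\mb$ is $\ker p_r$, not $C^*_r$ of that ideal, so there is no ``standard Fourier analysis'' shortcut here. A related caveat applies to your full-case meet argument: the bundle morphism $\mb/(\mj_1\cap\mj_2)\to \mb/\mj_1\oplus\mb/\mj_2$ is injective fibrewise, but injectivity of the induced map on full cross-sectional $C^*$-algebras is not automatic, since the image is not an ideal and \cite[3.1]{ae2} does not apply. If ``lattice homomorphism'' is read as requiring preservation of meets, the reduced-case claim without exactness needs a different argument (or the conclusion should be weakened to an injective order-preserving map, which is all the paper in fact establishes at this stage).
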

\begin{proof}
Let $\mj=(J_t)_{t\in J}$ be an ideal in $\mb$. By \cite[3.1]{ae2}, $\overline{C_c(\mj)}=C^*(\mj)\lhd
C^*(\mb)$, where $\overline{C_c(\mj)}$ is the closure of $C_c(\mj)$ in $C^*(\mb)$.
It follows that $B_e\cap C^*(\mj)=\mj\cap B_e$, which 
takes care of the injectivity, in view of
Proposition~\ref{bundid}.
 The rest of the proof follows immediately from \ref{thm:gs} and
\ref{cor:tfexact}.  
\end{proof}

\begin{ex}{\rm {\bf Ideal structure of Quantum Heisenberg Manifolds.}
 The family $\{D^c_{\mu,\nu}: c\in \ZZ, c>0, \mu, \nu\in \TT\}$ of Quantum Heisenberg Manifolds was constructed in \cite{rfhm} as a 
deformation of the Heisenberg manifold $M_c$ for a positive integer $c$. The $C^*$-algebra $\dmn$ was shown in \cite{aee}  to be the 
crossed product of $C(\TT^2)$ by a Hilbert $C^*$-bimodule $X^c_{\mu\nu}$, where $\TT$ denotes the unit circle.
Since $X^c_{\mu\nu}$ is full in both the left and the right, $\hat{\alpha}$ turns out to be a homeomorphism,  that was 
shown in \cite{ae1} (see also Example \ref{cphcb}) to be given by
\[\hat{\alpha}(x,y)=(x+2\mu, y+2\nu),\text{ for all } (x,y)\in \TT^2.\]
Let $G_{\mu\nu}$ denote the abelian free group $G_{\mu\nu}= \ZZ+2\mu\ZZ+2\nu\ZZ$. Rieffel showed in \cite[6.2]{rfhm} that $\dmn$ is simple if and only if
rank $G_{\mu\nu}=3$. On the other hand, when rank $G_{\mu\nu}=1$ the $C^*$-algebra $\dmn$ is Morita equivalent to the commutative 
$C^*$-algebra $C(M_c)$ (\cite[2.8]{meq}), 
and, consequently, has the same ideal structure.
We now discuss the case in which rank $G_{\mu\nu}=2$. First note that the action $\hat{\alpha}$ is free in that case. In fact,
$\hat{\alpha}_n(x,y)=(x,y)$ if and only if $2n\mu$ and $2n\nu$ are integers, which implies that $n=0$ or  rank $G_{\mu\nu}=1$.

Besides, $C(\TT^2)\rtimes X^c_{\mu\nu}$  has the exactness property by \cite[3.1]{ae2}, because it is the cross-sectional $C^*$-algebra of a Fell bundle \mb over 
the amenable group $\ZZ$. Thus, we are under the assumptions of Lemma
\ref{320}, and  
there is a lattice isomorphism between $\cI(\dmn)$ and the lattice of $\hat{\alpha}$-invariant open sets of the two-torus.

 }
\end{ex}

\section{Fell bundles with commutative unit fiber}

Throughout this section we will assume that the unit fiber of the Fell bundle \mb is commutative. That is, $B_e=C_0(X)$, for some locally compact Hausdorff space $X$. We will make use of the identifications and facts we established in Example \ref{cuf}.
 Let $j_t:B_t\longrightarrow \ell^2(\mb)$ be the inclusion map described in (\ref{jayt}). Exel proved in \cite{examen} that, for any  $c\in C^*_r(\mb)$ and $t\in B_t$, 
there is a unique element $\hat{c}(t)\in B_t$, called the
Fourier coefficient of $c$ corresponding to~$t$, such that
\[j_t^*cj_e(a)=\hat{c}(t)a, \ \forall a\in B_e.\]  
He also showed that
$c=0$ if and only if  $\hat{c}=0$ (\cite[2.6, 2.7, 2.12]{examen}).  

\begin{lem}\label{lem:commfourier}
Let $a\in B_e$ and $c\in C^*_r(\mb)$. Then $\widehat{ac}=a\hat{c}$ and
$\widehat{ca}=\hat{c}a$. 

Consequently, $c$ commutes with $a$ if and
only if $a\hat{c}(t)=\hat{c}(t)a$ for all $t\in G$.   
\end{lem}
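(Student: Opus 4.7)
The plan is to compute both Fourier coefficients directly from the defining identity $j_t^* c\, j_e(a) = \hat c(t)\,a$, exploiting how the left action of $B_e$ on $\ell^2(\mb)$ (via $\Lambda$) interacts with the inclusions $j_t$ and their adjoints.

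First I would establish two auxiliary identities. For $a\in B_e$, the definition of $\Lambda_a$ gives $(\Lambda_a\xi)(t)=a\,\xi(t)$ for every $\xi\in\ell^2(\mb)$ and every $t\in G$, since $t^{-1}s=s$ when we set $t=e$ in the formula for $\Lambda_{b_t}$. Reading this as an equality of $B_e$-module maps on the right yields
\[
j_t^*\,\Lambda_a \;=\; a\cdot j_t^*,
\]
where the $a$ on the right denotes left multiplication by $a$, regarded as an adjointable operator on the Hilbert $B_e$-module $B_t$. Specializing at $t=e$ and feeding in $j_e(b)=b\delta_e$ for $b\in B_e$, one also has $\Lambda_a\,j_e(b)=j_e(ab)$.

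Given these identities, the two claimed formulas drop out. For any $b\in B_e$,
\[
\widehat{ac}(t)\,b \;=\; j_t^*(\Lambda_a c)\,j_e(b) \;=\; a\bigl(j_t^*\,c\,j_e(b)\bigr) \;=\; a\,\hat c(t)\,b,
\]
so by the uniqueness clause in the defining identity, $\widehat{ac}(t)=a\,\hat c(t)$. Dually,
\[
\widehat{ca}(t)\,b \;=\; j_t^*\,c\,\Lambda_a\,j_e(b) \;=\; j_t^*\,c\,j_e(ab) \;=\; \hat c(t)(ab) \;=\; \bigl(\hat c(t)\,a\bigr)b,
\]
giving $\widehat{ca}(t)=\hat c(t)\,a$.

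For the consequence, if $ac=ca$ then equating Fourier coefficients via the two formulas gives $a\hat c(t)=\hat c(t)a$ for every $t$. Conversely, if $a\hat c(t)=\hat c(t)a$ for all $t$, then $\widehat{ac-ca}(t)=0$ for every $t$, so by the faithfulness of the Fourier transform on $C^*_r(\mb)$ (\cite[2.6, 2.7, 2.12]{examen}) we conclude $ac=ca$. There is no real obstacle here; the only point that requires care is to keep track of the fact that left multiplication by $a\in B_e$ is a well-defined adjointable operator on each fiber $B_t$ that intertwines evaluation $j_t^*$ with the global action $\Lambda_a$.
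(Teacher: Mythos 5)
Your proof is correct and follows essentially the same route as the paper: both establish the identities $\Lambda_a j_e(b)=j_e(ab)$ and $j_t^*\Lambda_a=a\,j_t^*$ and then read off the two Fourier-coefficient formulas from the defining identity, concluding the commutation statement from the injectivity of $c\mapsto\hat c$. No gaps.
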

\begin{proof}
Note that $\Lambda_aj_e(a')=j_e(aa')$, $\forall a'\in B_e$. Then 
\[\widehat{ca}(t)a' 
=j_t^*caj_e(a')
=j_t^*cj_e(aa')
=j_t^*cj_e(a)a'
=\hat{c}(t)aa',
\]
and it follows that $\widehat{ca}=\hat{c}a$. 

On the other hand,
as it is easily checked, $j_t^*\Lambda_a(\xi)=a\xi(t)$, for all $
\xi\in\ell^2(\mb)$. Therefore, if $a'\in B_e$: 
\[
\widehat{ac}(t)a'
=j_t^*acj_e(a')
=aj_t^*cj_e(a')
=a\hat{c}(t)a',
\]
which shows that $\widehat{ac}(t)=a\hat{c}(t)$. 
The last statement follows from the
first one and from the fact that $ac=ca$ if and only if $\widehat{ac-ca}=0$.
\end{proof}

\begin{lem}\label{lem:commbundle}
Let $b_t\in B_t$, and 
\[F_t=\{x\in X_{t^{-1}}: \hat{\alpha}_t(x)= x\}.\]  
Then $b_t\in B_e'$ if and only if 
$b_t(x)=0$ for all $ x\notin F_t$.
\end{lem}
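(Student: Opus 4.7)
The plan is to reduce the commutation condition $b_t\in B_e'$ to a pointwise condition on the "section" $x\mapsto b_t(x)\in B_t/B_tI_x$, and then exploit the separation of points by $B_e=C_0(X)$.

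First I would invoke Lemma~\ref{lem:commfourier}. Since $b_t\in B_t$, its Fourier coefficients are $\widehat{b_t}(s)=b_t$ if $s=t$ and $0$ otherwise, so the lemma tells us that $b_t$ commutes with every element of $B_e$ in $C^*_r(\mb)$ if and only if $ab_t=b_ta$ (equality in the fiber $B_t$) for every $a\in B_e$. So the whole problem is reduced to understanding this equality fiberwise.

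Next I would derive pointwise formulas. Formula~(\ref{abt}) from Example~\ref{cuf} already gives
\[(ab_t)(x)=\begin{cases}a(\hat{\alpha}_t(x))\,b_t(x)&\text{if }x\in X_{t^{-1}},\\ 0&\text{otherwise.}\end{cases}\]
For the right product $b_ta$ the analogous and simpler formula is $(b_ta)(x)=a(x)\,b_t(x)$ for every $x\in X$: this follows because $a-a(x)\cdot 1\in I_x$ (using an approximate identity in $C_0(X)$ if the algebra is nonunital), hence $b_ta-a(x)b_t\in B_tI_x$, and scalar multiplication passes to the quotient $B_t/B_tI_x$. Then I would note that because $B_t$ is a right Hilbert $C_0(X)$-module, $\|b_t\|^2=\sup_{x\in X}\|b_t(x)\|^2=\sup_x(b_t^*b_t)(x)$, so an element of $B_t$ is $0$ iff it vanishes at every $x\in X$. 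Hence $ab_t=b_ta$ is equivalent to the pointwise identity
\[\bigl(a(\hat{\alpha}_t(x))-a(x)\bigr)b_t(x)=0\quad(x\in X_{t^{-1}}),\qquad a(x)b_t(x)=0\quad(x\notin X_{t^{-1}}),\]
for every $a\in B_e$.

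Finally I would play the two standard separation arguments. If $x\notin X_{t^{-1}}$, choose $a\in C_0(X)$ with $a(x)\neq 0$ to force $b_t(x)=0$. If $x\in X_{t^{-1}}$ but $\hat{\alpha}_t(x)\neq x$, use that $C_0(X)$ separates points of $X$ to pick $a$ with $a(\hat{\alpha}_t(x))\neq a(x)$, again forcing $b_t(x)=0$. Thus $b_t\in B_e'$ implies $b_t(x)=0$ for every $x\notin F_t$. The converse is immediate from the pointwise formulas: on $F_t$ we have $\hat{\alpha}_t(x)=x$ so $a(\hat{\alpha}_t(x))b_t(x)=a(x)b_t(x)$, and off $F_t$ both sides vanish.

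The only slightly delicate point — and the one I would be careful about — is justifying the right-multiplication formula $(b_ta)(x)=a(x)b_t(x)$ and the fact that $b_t=0$ can be detected pointwise; both come down to the standard Hilbert-module/$C_0(X)$-bundle picture, but they are the only places where commutativity of $B_e$ enters essentially.
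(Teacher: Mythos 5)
Your proof is correct and follows essentially the same route as the paper's: reduce commutation to the pointwise identity $b_t(x)a(\hat{\alpha}_t(x))=b_t(x)a(x)$ via formula~(\ref{abt}) and then separate points with elements of $C_0(X)$. The extra steps you supply (the right-multiplication formula $(b_ta)(x)=a(x)b_t(x)$, pointwise detection of zero in the Hilbert module $B_t$, and the reduction via Lemma~\ref{lem:commfourier}) are exactly the facts the paper leaves implicit, so this is the same argument written out in more detail.
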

\begin{proof}
Since $ab_t=b_ta$ if and only if $(ab_t-b_ta)(x)=0$  for all $x\in X,$
we have that $b_t\in B_e'$ if and only if
$b_t(x)a(\hat{\alpha}_t(x))=b_t(x)a(x)$ for all $ x\in 
X_{t^{-1}}$ and $ a\in B_e$. Thus $b_t\in B_e'$ if
$b_t(x)=0$ for all $x\notin F_t$. 

Conversely, if $b_t\in
B_e'$, and $x\in X_{t^{-1}}\setminus
F_t$, we can pick an element $a\in B_e$ such that $a(x)\neq
0=a(\hat{\alpha}_t(x))$. Then $b_t(x)a(x)=0$, which shows that $b_t(x)=0$. 
\end{proof}

\par  Zeller-Meier showed that if $\alpha$ is an action of a discrete group $G$ 
on a commutative
$C^*$-algebra $A$, then $A$ is a maximal commutative $C^*$-subalgebra of the reduced crossed product
$A\rtimes_{\alpha,r}G$ if and only if $\alpha$ is topologically free
on $\hat{A}$ (\cite[Proposition~4.14]{zm}). The previous 
results allow us to generalize that result in the following
way:

\begin{prop}\label{cor:commreduced}
Let $B_e'$ be the commutant of $B_e$ in $C^*_r(\mb)$. Then $B_e'=B_e$
if and only if $\hat{\alpha}$ is topologically free.      
\end{prop}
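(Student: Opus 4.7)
The plan is to exploit the two preceding lemmas, which together reduce the condition of commuting with $B_e$ to a pointwise condition on Fourier coefficients. The inclusion $B_e\subseteq B_e'$ is automatic since $B_e$ is commutative, so both directions concern when elements of nontrivial grading can lie in $B_e'$.

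For the ``if'' direction, I assume $\hat{\alpha}$ is topologically free and fix $c\in B_e'$. By Lemma~\ref{lem:commfourier}, each Fourier coefficient $\hat{c}(t)$ commutes with every $a\in B_e$, so by Lemma~\ref{lem:commbundle}, $\hat{c}(t)(x)=0$ for all $x$ outside $F_t=\{x\in X_{t^{-1}}:\hat{\alpha}_t(x)=x\}$. For $t\neq e$, topological freeness gives that $F_t$ has empty interior, so $X\setminus F_t$ is dense in $X$. I would then use that $\hat{c}(t)^*\hat{c}(t)\in B_{t^{-1}}B_t\subseteq B_e=C_0(X)$ is a genuine continuous function on $X$ with $(\hat{c}(t)^*\hat{c}(t))(x)=\|\hat{c}(t)(x)\|^2$; vanishing on the dense set $X\setminus F_t$ therefore forces it to vanish identically, so $\hat{c}(t)=0$. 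Consequently $c-E(c)$ has all Fourier coefficients zero, and by the uniqueness of Fourier coefficients (\cite[2.12]{examen}), $c=E(c)\in B_e$.

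For the ``only if'' direction, I argue contrapositively. If $\hat{\alpha}$ is not topologically free, pick $t\neq e$ and a non-empty open set $U\subseteq F_t\subseteq X_{t^{-1}}$. Choose $b_t\in B_t$ and $x_0\in U$ with $b_t(x_0)\neq 0$ (possible because $U\subseteq X_{t^{-1}}$), and $\phi\in C_0(X)=B_e$ supported in $U$ with $\phi(x_0)\neq 0$. Set $c:=b_t\phi\in B_t$. Computing in the quotient at each $x$ gives $c(x)=b_t(x)\phi(x)$ for $x\in X_{t^{-1}}$ and $c(x)=0$ otherwise, so $c(x_0)\neq 0$ and in particular $c\neq 0$; since $t\neq e$ one has $E(c)=0$, hence $c\notin B_e$. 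For $x\notin F_t$, either $x\notin X_{t^{-1}}$ (so $c(x)=0$ trivially) or $x\in X_{t^{-1}}\setminus F_t\subseteq X\setminus U$ (so $\phi(x)=0$ and again $c(x)=0$). Lemma~\ref{lem:commbundle} then yields $c\in B_e'\setminus B_e$.

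The only substantive step, and the reason commutativity of $B_e$ is essential, is the continuity argument in the ``if'' direction: it is precisely what upgrades the pointwise vanishing of $\hat{c}(t)$ on a dense set to vanishing everywhere, through the honest $C_0(X)$-valued norm-square $\hat{c}(t)^*\hat{c}(t)$.
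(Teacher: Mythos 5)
Your proof is correct and follows essentially the same route as the paper: both directions reduce, via Lemmas~\ref{lem:commfourier} and~\ref{lem:commbundle}, to the statement that $c\in B_e'$ iff each $\hat{c}(t)$ vanishes off $F_t$, and the converse direction manufactures an element of $B_t\cap B_e'$ by right-multiplying by a function supported in the interior of $F_t$ (the paper uses $a\in D_{t^{-1}}$ vanishing off $F_t$ and the observation $B_ta\neq 0$ where you fix a point $x_0$ instead). Your explicit density argument via the continuous function $\hat{c}(t)^*\hat{c}(t)\in C_0(X)$ is a welcome elaboration of a step the paper leaves implicit.
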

\begin{proof}
Let $c\in C^*_r(\mb)$. By \ref{lem:commfourier} and
\ref{lem:commbundle} we have $c\in B_e'$ if and only if
$\hat{c}(t)=0$ outside $F_t$, $\forall t\in G$. Then if for all $t\neq 
e$ the interior of $F_t$ is empty, we have $\hat{c}(t)=0$, so $c\in
B_e$, and therefore $B_e'=B_e$. On the other hand, if there exists
$t\neq e$ such that $F_t$ has a non empty interior, then
there exists $a\in D_{t^{-1}}$, $a\neq 0$, such that
$a(x)=0$ $\forall x\notin F_t$. Since $B_ta\neq 0$, there   
exists $b'_t\in B_t$ such that $0\neq b'_ta=:b_t\in B_t$. Now
$b_t(x)=0$ $\forall x\notin F_t$, and therefore $b_t\in
B_e'\setminus B_e$.      
\end{proof}

\begin{cor}\label{cor:commreduced2}
The partial action $\hat{\alpha}$ is topologically free if and only if $B_e$
is a maximal commutative $C^*$-subalgebra of $C^*_r(\mb)$ (and, consequently, it is 
a Cartan subalgebra of $C^*_r(\mb)$).    
\end{cor}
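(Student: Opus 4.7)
The first equivalence is essentially a restatement of Proposition \ref{cor:commreduced}. Since $B_e$ is itself commutative, it is a maximal commutative $C^*$-subalgebra of $C^*_r(\mb)$ precisely when every element of $C^*_r(\mb)$ commuting with $B_e$ already lies in $B_e$, i.e.\ when $B_e' = B_e$. So my plan for the main equivalence is simply to note this reformulation and invoke Proposition \ref{cor:commreduced}; nothing new is needed.

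For the parenthetical ``consequently, a Cartan subalgebra of $C^*_r(\mb)$'' I would verify the four defining conditions (MASA; contains an approximate identity; its normalizers generate the ambient $C^*$-algebra; admits a faithful conditional expectation). The MASA condition is exactly the first half of the statement, and the faithful conditional expectation is the map $E_\mb : C^*_r(\mb)\to B_e$ described just before Definition \ref{diaginv}. So the real work is to check the regularity condition and the approximate-unit condition.

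For the approximate unit I would take any approximate unit $(u_\lambda)$ of $B_e=C_0(X)$ and show it is an approximate unit of $C^*_r(\mb)$. Since $B_t$ is nondegenerate as a left $B_e$-module (because $B_t = D_t B_t$ with $D_t = B_t B_t^* \subseteq B_e$), one has $u_\lambda b_t \to b_t$ for each $b_t\in B_t$, and hence $u_\lambda c \to c$ for every $c$ in the dense subalgebra $C_c(\mb)$, by a routine finite-support argument. An $\varepsilon/3$ estimate then extends the convergence to all of $C^*_r(\mb)$.

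For the regularity condition, I would show that each homogeneous element $b_t\in B_t$ belongs to the normalizer $N(B_e)$ of $B_e$ in $C^*_r(\mb)$. Indeed,
\begin{equation*}
b_t B_e b_t^* \subseteq B_t B_e B_t^* \subseteq B_t B_t^* = D_t \subseteq B_e,
\end{equation*}
and symmetrically $b_t^* B_e b_t \subseteq B_e$, so $b_t \in N(B_e)$. Since $\bigcup_{t\in G} B_t$ generates $C^*_r(\mb)$ as a $C^*$-algebra, so does $N(B_e)$. I do not expect any serious obstacle here: the only point that requires a moment's care is the approximate-unit step, which ultimately rests on the nondegeneracy $B_e B_t = B_t$ coming from $D_t\subseteq B_e$.
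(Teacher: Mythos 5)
Your proof is correct and follows exactly the route the paper intends: the corollary is stated without proof, the main equivalence being precisely the reformulation of maximal commutativity as $B_e'=B_e$ combined with Proposition~\ref{cor:commreduced}. Your verification of the Cartan conditions (approximate unit via $B_eB_t=B_t$, regularity via $b_tB_eb_t^*\subseteq D_t\subseteq B_e$, and the faithful conditional expectation $E_\mb$ from \cite[2.12]{examen}) correctly supplies the details that the paper leaves to the reader.
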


\subsection{The case of partial crossed products.}

 \par We will consider next a partial action on a
 commutative $C^*$-algebra $A=C_0(X)$, where $X$ a locally compact
 Hausdorff space. It is clear from Example \ref{pcp} that in this case the partial action $\hat{\alpha}$ associated to the Fell 
 bundle agrees with $\alpha$ when $X$ is identified in the usual way with $\hat{A}$. 
 In what follows we will write $\alpha$ to denote either one.

\begin{thm}\label{thm:tfpactions}
 Suppose that $\alpha$ is  a partial action of a discrete group $G$ on a
 commutative $C^*$-algebra. Consider the following
 statements: 
\begin{enumerate}
 \item $A$ is a maximal commutative $C^*$-subalgebra of
   $A\rtimes_{\alpha,r}G$. 
 \item $\alpha$ is a topologically free. 
 \item If $I$ is an ideal in $A\rtimes_{\alpha}G$ with $A\cap I=\{0\}$,
   then $I\subseteq \ker\Lambda$, where
   $\Lambda:A\rtimes_{\alpha}G\to A\rtimes_{\alpha,r}G$ is the
   canonical map.  
 \item If $I$ is a non-zero ideal of $A\rtimes_{\alpha,r}G$, then
   $A\cap I\neq \{0\}$. 
 \item If a representation $\phi:A \rtimes_{\alpha,r}G\to B(H)$ is
   faithful when restricted to $A$, then $\phi$ is faithful.     
 \end{enumerate}
Then we have that $(i)\iff (ii)\iff  (iii)\Rightarrow (iv)\iff (v)$. 
\end{thm}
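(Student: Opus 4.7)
The plan is to translate everything to the Fell bundle $\mb_\alpha$ associated with $\alpha$. By Example~\ref{pcp}, the partial homeomorphism of $\mb_\alpha$ coincides with $\alpha$ under the identification $X=\hat A$, so all five statements can be read off $\mb_\alpha$ and the results of Sections~\ref{sec:pafb} and~\ref{sec:rideals} apply verbatim.

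Four of the six implications will be short. The equivalence $(i)\iff(ii)$ is Corollary~\ref{cor:commreduced2} for $\mb_\alpha$; $(ii)\Rightarrow(iii)$ is Corollary~\ref{ibe}(i) (with $\phi$ the identity and $J:=\{0\}$); and $(iv)\iff(v)$ is the routine kernel--quotient argument (faithfulness of $\phi|_A$ gives $\ker\phi\cap A=\{0\}$; conversely, composing the quotient by an alleged nonzero ideal with $A$-trivial intersection by a faithful representation of the quotient yields an $A$-faithful yet non-injective representation). For $(iii)\Rightarrow(iv)$ I would pull back a nonzero ideal $I\lhd A\rtimes_{\alpha,r}G$ to $\widetilde I:=\Lambda^{-1}(I)\lhd A\rtimes_\alpha G$, and observe that, since $\Lambda|_A$ is injective, $I\cap A=\{0\}$ forces $\widetilde I\cap A=\{0\}$; then $(iii)$ gives $\widetilde I\subseteq\ker\Lambda$, hence $I=\Lambda(\widetilde I)=\{0\}$, a contradiction.

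The core step will be $(iii)\Rightarrow(ii)$, for which I would argue by contrapositive. Assuming $\alpha$ is not topologically free, I would fix $t_0\ne e$ and a nonempty open $U\subseteq F_{t_0}$ on which $\alpha_{t_0}$ acts trivially, pick $x_0\in U$ with isotropy subgroup $H:=\{s\in G:x_0\in X_{s^{-1}},\ \alpha_s(x_0)=x_0\}\ni t_0$, and choose a character $\chi\colon H\to\TT$ with $\chi(t_0)\ne 1$. The plan is then to build, via induction from the one-dimensional twisted point representation attached to $(x_0,\chi)$ (Mackey-style, as in \cite{elq}), a covariant representation of $(A,\alpha,G)$ integrating to a $*$-representation $\sigma\colon A\rtimes_\alpha G\to B(\mh)$. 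A direct Fourier-coefficient computation will show that, for any nonzero $f\in C_c(U)$, the element $h:=f\delta_{t_0}-\chi(t_0)f$ is annihilated by $\sigma$ while $\Lambda(h)\ne 0$ (its Fourier coefficients at $e$ and $t_0$ are both nonzero). The main obstacle is arranging $\sigma|_A$ to be faithful on all of $A$: a single twisted point representation is faithful only on the closure of the orbit of $x_0$, so one must take a direct sum of compatible twisted representations over enough points of $X$ to separate $C_0(X)$, verifying that each summand still kills $h$ (this holds on $U$ by the compatible choice of characters and outside $U$ because $f$ vanishes there); and when no character of $H$ detects $t_0$, one replaces $\chi$ by a higher-dimensional irreducible representation of $C^*(H)$ and argues analogously. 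Once this $A$-faithfulness is secured, $\ker\sigma$ is an ideal of $A\rtimes_\alpha G$ meeting $A$ trivially yet not contained in $\ker\Lambda$, contradicting $(iii)$.
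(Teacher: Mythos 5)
Your treatment of the easy implications matches the paper: $(i)\iff(ii)$ is Corollary~\ref{cor:commreduced2}, $(ii)\Rightarrow(iii)$ is Corollary~\ref{ibe}, $(iii)\Rightarrow(iv)$ is the pull-back along $\Lambda$ exactly as in the proof of Corollary~\ref{ibe}(ii), and $(iv)\iff(v)$ is routine. The problem is the core implication $(iii)\Rightarrow(ii)$, where your construction has a genuine gap. You hinge everything on a character $\chi$ of the isotropy group $H$ of $x_0$ with $\chi(t_0)\neq 1$; such a character exists only when $t_0\notin\overline{[H,H]}$, and nothing in the hypotheses prevents the isotropy group of every point of $U$ from being perfect. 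Your fallback --- replacing $\chi$ by a higher-dimensional irreducible representation of $C^*(H)$ --- does not repair this, because the test element $h=f\delta_{t_0}-\chi(t_0)f$ needs $\chi(t_0)$ to be a scalar; with a higher-dimensional $\pi(t_0)$ you have not said what element of $C_c(U)\delta_e+C_c(U)\delta_{t_0}$ replaces $h$, nor why it would be killed by the induced representation while surviving $\Lambda$. A secondary slip: in checking that every summand kills $h$ you split cases according to whether the base point lies in $U$, but the orbit of a point outside $U$ can still meet $\supp(f)$; the correct dichotomy is on the basis vector $e_y$, according to whether $y\in\supp(f)$ or not (using that $\alpha_{t_0}(y)\in\supp(f)\subseteq F_{t_0}$ forces $y=\alpha_{t_0}(y)$).

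The paper's proof (adapting Archbold--Spielberg and Kawamura--Tomiyama) shows the twist is unnecessary, which removes the obstruction. For every $x\in X$ it forms the untwisted orbit representation $\rho^x=\pi^x\rtimes v^x$ on $\ell^2(o(x))$, where $v^x$ permutes the basis along the partial orbit and $\pi^x$ acts by multiplication; the ideal $I=\bigcap_x\ker\rho^x$ meets $A$ trivially, so $(iii)$ gives $I\subseteq\ker\Lambda$. For $a$ supported in $F_t$ one checks $a\delta_e-a\delta_t\in I$ --- no character is needed, since the two terms cancel on $\supp(a)$ and both vanish off it. Non-membership in $\ker\Lambda$ is then detected not by making the representation distinguish the $e$- and $t$-components, but by the conditional expectation: $E$ vanishes on $\ker\Lambda$ while $E(a\delta_e-a\delta_t)=a$, so $a=0$ and $F_t$ has empty interior. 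If you want to keep your contrapositive formulation, simply take $\lambda=1$ in your test element: $\Lambda(f\delta_e-f\delta_{t_0})\neq 0$ already follows from $E(f\delta_e-f\delta_{t_0})=f\neq 0$ (equivalently, from the Fourier coefficients at $e$ and $t_0$ being nonzero), and the whole apparatus of isotropy characters and their compatibility across points disappears.
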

\begin{proof} 
Corollary \ref{cor:commreduced2} shows that (i) and (ii) are equivalent. Besides, Corollary \ref{ibe} proves that
(ii) implies (iii), and its proof shows that (iii) implies (iv). Since (iv) and (v) are obviously equivalent, 
we are left with the proof of the fact that (iii) implies (ii).  
We will adapt to our setting the proof for global actions in \cite[Theorem~2]{ar}, which in turn
   essentially follows \cite{kt}.
   Suppose (iii) holds. Let $X$ be a locally compact Hausdorff topological space such that $A=C_0(X)$.

   Given $x\in
   X$, let $o(x)$ denote the $\alpha$-orbit of $x$: 
   $o(x):=\{\alpha_t(x):\,t \textrm{ such that }x\in X_{t^{-1}}\}$. Let 
   $H^x:=\ell^2(o(x))$ with its canonical orthonormal basis
   $\{e_y:y\in o(x)\}$. Consider $v^x:G\to B(H^x)$ defined by 
   \[v_t^x(e_y)=\begin{cases}e_{\alpha_t(y)}&\textrm{if
   }y\in X_{t^{-1}}\\ 0&\textrm{otherwise.}\end{cases}\]

   Thus $v_t^x$
   is a partial isometry with initial space $\ell^2(o(x)\cap
   X_{t^{-1}}))$ and final space $\ell^2(o(x)\cap X_{t})$.

   We claim
   that $v^x$ is a partial representation of $G$. Let us first note 
   that $(v_t^x)^*=v_{t^{-1}}^x$, since
  \[\pr{v_t^x(e_y)}{e_z}=\begin{cases}1&\textrm{ if } y\in X_{t^{-1}} \textrm{ and } z\in X_t\\
                          0&\textrm{otherwise}\end{cases}
                         =\pr{e_y}{v_{t^{-1}}^x(e_z)}.\]
We next show that
   \[v_r^xv_s^xv_{s^{-1}}^x(e_y)=v_{rs}^xv_{s^{-1}}^x(e_y), \textrm{ for all } r,s\in G,\  y\in o(x).\]
In fact, we have on the one hand that
  \[ v_r^xv_s^xv_{s^{-1}}^x(e_y)=\begin{cases} e_{\alpha_r(y)}&\textrm{ if } y\in X_s\cap X_{r^{-1}}\\
                          0&\textrm{otherwise.}\end{cases}\]
 On the other hand,
             \[v_{rs}^xv_{s^{-1}}^x(e_y) =\begin{cases} 0&\textrm{ if } y\not\in X_s\cap \alpha_s(X_{s^{-1}r^{-1}}\cap X_{s^{-1}})=
             X_{r^{-1}}\cap X_s\\
                          e_{\alpha_r(y)}&\textrm{otherwise.}\end{cases}\]             
   
\par We now define the representation $\pi^x:A\to
   B(H^x)$ by $\pi^x(a)(e_y)=a(y)e_y$, for all $ a\in
   A$ and  $y\in o(x)$.

   We claim that the pair $(\pi^x,v^x)$ is a covariant
representation of the system $(A,\alpha)$. In fact, if $a\in
C_0(X_{t^{-1}})$, $y\in o(x)$: 
\begin{gather*}
\pi^x(\alpha_t(a))(e_y)
=\alpha_t(a)(y)e_y
=\begin{cases} a(\alpha_{t^{-1}}(y))e_y&\textrm{if }y\in
X_t\\
0&\textrm{otherwise.}
 \end{cases}
\end{gather*} 
On the other hand, $v_t^x\pi^x(a)v_{t^{-1}}^x(e_y)=0$ if $y\notin
X_t$, and if $y\in X_t$:  
\begin{gather*}
v_t^x\pi^x(a)v_{t^{-1}}^x(e_y)
=v_t^x(a(\alpha_{t^{-1}}(y))e_{\alpha_{t^{-1}}(y)})
=a(\alpha_{t^{-1}}(y))e_{y}.
\end{gather*} 
\par Let $\rho^x:A\rtimes_\alpha G\to B(H^x)$ be the integrated form 
$\rho^x=\pi^x\rtimes v^x$ of the covariant representation
$(\pi^x,v^x)$. If $I=\bigcap_{x\in X}\ker\rho^x$, then
$I\cap A=0$, since if $a\in A$ and $\rho^x(a)=0$ for all $x\in X$,
then 
\[0=\rho^x(a)(e_y)=a(y)e_y,\ \forall x\in X, y\in o(x),\]
which shows that $a=0$. Since we are assuming that (iii) holds,  $I\subseteq
\ker\Lambda$. 

Let $t\neq e$ and $a\in A$ be such that
$\textrm{supp}(a)\subseteq \{x\in X\cap
X_{t^{-1}}:\alpha_t(x)=x\}$. Then we have, for $x\in X$, $y\in o(x)$:  
\begin{itemize}
 \item[$\bullet$] if $y\in\textrm{supp}(a)$ then
   $\alpha_t(y)=y$, and 
\[\rho^x(a\delta_e-a\delta_t)(e_y)
=a(y)e_y-a(\alpha_t(y))e_{\alpha_t(y)}=0.\]
 \item[$\bullet$] if $y\notin\textrm{supp}(a)$ then
   $\alpha_t(y)\notin\textrm{supp}(a)$, and therefore we have 
\[\rho^x(a\delta_e-a\delta_t)(e_y)
=a(y)e_y-a(\alpha_t(y))e_{\alpha_t(y)}=0.\]  
From the computations above we conclude that $a\delta_e-a\delta_t\in
I$. Therefore $a\delta_e-a\delta_t\in\ker\Lambda$. Then
$a=E(a\delta_e-a\delta_t)=0$, from which it follows that the set $\{x\in X\cap
X_{t^{-1}}:\alpha_t(x)=x\}$ has empty interior. 
\end{itemize}
\end{proof}

\end{document}